%% file: waveguidecondition.tex
\documentclass[aps,prx,preprint,groupedaddress]{revtex4-1}

\input latex_defs.tex

\usepackage{amssymb}
\usepackage{amsfonts}
\usepackage[cmex10]{amsmath}
\usepackage{array}
\usepackage{natbib}
\usepackage{lineno}
\usepackage{amssymb,bm}
\usepackage{cases}
\usepackage{epstopdf}
\usepackage{graphics,graphicx,subfigure}
\usepackage{amsthm} 
\begin{document}
\renewcommand{\thesection}{\arabic{section}}
\renewcommand{\thesubsection}{\thesection.\arabic{subsection}}
\renewcommand{\thesubsubsection}{\thesubsection.\arabic{subsubsection}}
\theoremstyle{plain}
\newtheorem{theorem}{Theorem}[section]

\theoremstyle{definition}
\newtheorem{definition}{Definition}[section]
\newtheorem{remark}{Remark}
\newtheorem{lemma}{Lemma}[section]
\newtheorem{corollary}{Corollary}[section]

\numberwithin{equation}{section}
\newcommand{\m}[2]{\ensuremath{\,#1\,}#2}
\newcommand{\mm}[2]{\ensuremath{\:#1\:}#2}
\title{Symmetry of Solutions and Domain-Reduction Finite Element Method for Second-Order Linear Elliptic Dirichlet Boundary Value Problems on Bounded Domains}


\author{ Xianlong Pan and Wei Jiang}
\thanks{Corresponding author: Wei Jiang,  Email: jwmathphy@163.com}
\affiliation{School of Physics and  Mechatronics Engineering, Guizhou Minzu University, Guiyang 550025, China}

\begin{abstract}
Combining classical group theory and partial differential equation theory, this paper investigates the symmetry group $\operatorname{Sym}(u)$ of the unique solution $u$ to the second-order linear elliptic boundary value problem on an $n$-dimensional bounded domain $\Omega$
\begin{subequations}
\begin{numcases}{}
    -\sum_{i,j=1}^{n} a_{ij}(x)u_{x_ix_j} + \sum_{i=1}^{n} b_i(x)u_{x_i} + c(x)u = f(x), & $x\in \Omega$, \nonumber\\
    u(x) = h(x), & $x\in \partial \Omega$.\nonumber
\end{numcases}
\end{subequations}
The following symmetry groups are defined and characterized respectively: the symmetry group $\operatorname{Sym}(A)$ of the second-order coefficient matrix function $A(x)=(a_{ij}(x))_{n\times n}$; the symmetry group $\operatorname{Sym}(b)$ of the first-order coefficient column vector function $b(x)=(b_{1}(x),b_{2}(x),\cdots,b_{n}(x))^{T}$; the symmetry group $\operatorname{Sym}(c)$ of the zero-order coefficient function $c(x)$; the symmetry group $\operatorname{Sym}(f)$ of the internal source function $f(x)$; and the symmetry group $\operatorname{Sym}(h)$ of the boundary source function $h(x)$. This paper rigorously proves that the common symmetry group $\operatorname{Sym}(A)\cap\operatorname{Sym}(b) \cap\operatorname{Sym}(c) \cap\operatorname{Sym}(f) \cap\operatorname{Sym}(h)$ is a subgroup of $\operatorname{Sym}(u)$. In addition, if the common symmetry group contains several mirror symmetry elements, the original second-order linear elliptic boundary value problem on the entire domain $\Omega$ can be reduced to the corresponding boundary value problem on a certain subdomain. It is strictly proven in this paper that the new boundary condition imposed on the boundary of the subdomain is the homogeneous generalized Neumann boundary condition. The linear finite element method is used to numerically solve the second-order linear elliptic boundary value problem on the subdomain, thereby achieving domain reduction and significantly reducing the computational cost. Finally, the correctness of the theoretical results is verified through one theoretical model example and three numerical experiments.
\end{abstract}
\keywords{Symmetry group; Finite element; Second-order linear elliptic differential operator; Boundary value problems.}
\maketitle
\section{Introduction}
\indent Second-order linear elliptic partial differential equations constitute the core mathematical models for describing diverse steady-state equilibrium problems. Their comprehensive theoretical framework forms an indispensable component of modern partial differential equation theory, which enables precise characterization of the distribution laws governing various time-invariant physical fields, including electrostatic fields, magnetostatic fields, steady temperature fields, elastic stress fields, and irrotational steady flow fields. Widely deployed in engineering scenarios such as electromagnetic device design, structural mechanical analysis, and industrial thermal control optimization, these equations serve as pivotal modeling tools bridging mathematical theory and engineering practice.\\
\indent Differential equations serve as the core modeling vehicle for all classical mathematical-physical models, encompassing the Maxwell system governing the evolution of electromagnetic fields, the hydrodynamic systems characterizing fluid motion behaviors, and the Schr?dinger equation describing quantum microscopic states. Their intrinsic laws and solution properties can be analyzed relying on the theoretical framework of partial differential equations\cite{zhou2005}. The theory of Lie groups and Lie algebras constitutes a powerful mathematical tool for symmetry analysis of differential equations, establishing a systematic methodology for symmetry identification, classification and reduction of equations. Among relevant researches, Ovsiannikov constructed a complete framework for group analysis of differential equations and laid the foundational groundwork for this research field \cite{Ovsi1982}. In his seminal monograph, Olver further refined the application framework of Lie group approaches and formulated the currently prevalent theoretical paradigm for group analysis of differential equations, which has been extensively adopted in symmetry investigations of diverse mathematical-physical equations \cite{Olver1993}. As for the research on symmetry of solutions to elliptic equations, classical theories based on the maximum principle and the moving plane method have further enriched symmetry analysis techniques, forming a crucial theoretical basis for exploring the symmetry of solutions to elliptic problems on bounded domains and accelerating the rapid advancement of symmetry theory for elliptic equations \cite{GIDAS1979,FRA2000}.\\
\indent In the field of numerical computation, the concept of leveraging symmetry to reduce computational domains has a long-standing history. Ballisti et al.\cite{bal1982} first applied the theory of finite group representations to electromagnetic field computation problems with symmetric boundaries. Douglas and Mandel established an abstract theoretical framework for domain reduction methods, furnishing a rigorous mathematical foundation for symmetry-based reduction\cite{dou192}. Allgower et al.\cite{all1992} systematically investigated the exploitation of symmetry within the boundary element method, decomposing the original problem into a set of subproblems via equivariant mappings. Bossavit conducted systematic research on utilizing the symmetry inherent to physical problems to decompose linear operator equations within the finite element framework. By introducing group representation theory and noncommutative harmonic analysis, he split the original problem into multiple subproblems defined on symmetric cells, which drastically cuts down computational complexity\cite{bos1986,bos1993}. Lobry and Broche\cite{lob1994,lob1996} further extended group representation theory to boundary element formulations and three-dimensional eddy current problems, addressing geometric symmetries and non-Abelian symmetry groups.
In recent years, Hou, Liu and Zhou proposed a symmetrized two-scale finite element method for partial differential equations admitting symmetric solutions\cite{hou2022}. Taking advantage of the symmetry of solutions over tensor-product meshes, this method reduces high-dimensional finite element approximations into combinations defined on coarse meshes and one-dimensional fine meshes, achieving substantial computational cost savings while retaining asymptotically optimal accuracy. Wang et al. developed a reduced-order algorithm based on scaled boundary finite element discretization for cyclically symmetric structures, which delivers remarkable improvements in computational efficiency for elasticity analyses\cite{wang2023}. Furthermore, in finite element modal analyses of photonic devices, researchers have attained manifold speedups by jointly exploiting point-group spatial symmetries and pseudo-Hermitian spacetime symmetries\cite{wang2024}. These studies demonstrate that symmetry can not only facilitate computational domain reduction, but also guide the construction of multiscale schemes and reduced-order models.

\indent The second-order linear elliptic Dirichlet boundary value problem defined on bounded domains represents a canonical model in computational mathematics and engineering numerical simulation, and the development of its high-efficiency numerical solvers has long remained a prominent research focus among scholars worldwide\cite{wang2015}. Current research on elliptic boundary value problems predominantly centers on rigorous proofs of theoretical properties of solutions and the optimization of numerical algorithms. The theoretical frameworks of conventional finite element methods and finite difference methods are well-established, enabling high-precision numerical approximation of elliptic problems and rendering them the mainstream tools for engineering computational analysis\cite{bre2008,Ciar2002}.
For second-order linear elliptic boundary value problems, conforming, nonconforming and mixed finite element methods have all attained mature development. Specifically targeting three-dimensional scenarios, Zhao Zhongjian and Chen Shaochun\cite{chenshao2020} proposed a novel low-order triangular prism mixed finite element scheme. From the perspective of elliptic operator theory, Dolbeault et al.\cite{Dol2015} conducted investigations centering on the symmetry of elliptic operators and the symmetric characteristics of associated solutions, and discussed the symmetric and monotonic behaviors of solutions to elliptic equations under specific assumptions. Most existing literature restricts analysis to particular symmetric constraints or special geometric domains. There lacks a systematic characterization of symmetry groups coupled with multiple factors including coefficient terms, source terms and boundary conditions for general second-order linear elliptic equations. Furthermore, a complete numerical reduction and domain decomposition framework built upon coupled multi-source symmetries has not yet been established. Considerable research gaps still exist regarding the integrated application of symmetry theories and high-performance finite element computation.\\
\indent Against the aforementioned research backdrop, this paper carries out systematic research on solution symmetry and domain reduction finite element methods for second-order linear elliptic Dirichlet boundary value problems. The chapter organization of this paper is arranged as follows. Chapter 2 formulates the standard mathematical model of second-order linear elliptic Dirichlet boundary value problems on bounded domains and introduces the corresponding well-posedness theorems. Chapter 3 analyzes the transformation properties of second-order linear elliptic differential operators under orthogonal coordinate transformations. On this basis, the symmetry groups of the second-order coefficient matrix, first-order coefficient vector, zero-order coefficient, interior source term and boundary source term involved in the boundary value problem are defined, and it is rigorously proven that the common symmetry group of all the above functional symmetry groups constitutes a subgroup of the solution symmetry group. Chapter 4 investigates the reduction mechanism of mirror symmetry elements on computational domains. It is verified that if mirror symmetry elements are contained in the common symmetry group, the global Dirichlet boundary value problem can be equivalently reduced to a boundary value problem on a subdomain, where the newly generated boundary of the subdomain satisfies the generalized homogeneous Neumann boundary condition. This further perfects the symmetry reduction theory for elliptic boundary value problems. Relying on the established theory, linear finite element methods are adopted to numerically solve the reduced subdomain model. Computational dimensions and mesh scales are cut down via symmetric domain reduction, thereby significantly lowering numerical computational overhead. Ultimately, one theoretical example together with three numerical experiments are presented to verify the correctness and efficiency of the proposed symmetry group theory and domain reduction algorithm in this work.

\section{The second-order linear elliptic Dirichlet boundary value problem}
Let $\Omega$ be a bounded Lipschitz domain in $\mathbb{R}^n$, and let $\partial\Omega$ denote the boundary of $\Omega$. Suppose $f({x})$ is a given $n$-variable function defined on $\Omega$, $h({x})$ is a given $n$-variable function defined on $\partial\Omega$, and $u({x})$ stands for the unknown $n$-variable function over $\Omega\cup\partial\Omega$.

Consider the following second-order linear differential boundary value problem:
\begin{subequations} \label{eq1}
\begin{numcases}{}
    \mathcal{L}(u(x)) = f(x) & $x\in \Omega$ \label{eq1a} \\
    u(x) = h(x) & $x\in \partial \Omega$ \label{eq1b}
\end{numcases}
\end{subequations}
where $\mathcal{L}$ denotes an $n$-dimensional second-order linear differential operator, which takes the form
\begin{equation}\label{eq2}
\mathcal{L}u = -\sum_{i,j=1}^{n} (a_{ij}(x)u_{x_i})_{x_j} + \sum_{i=1}^{n} b^{i}(x)u_{x_i} + c(x)u
\end{equation}
or
\begin{equation}\label{eq3}
 \quad \mathcal{L}u = -\sum_{i,j=1}^{n} a_{ij}(x)u_{x_ix_j} + \sum_{i=1}^{n} b_i(x)u_{x_i} + c(x)u
\end{equation}
The coefficient functions $\,a_{ij}(x)=a_{ji}(x),\,b_i(x),\,b^{i}(x)$, $c(x)\;(i,\,j = 1,\,\dots,\,n)$ are all given $n$-variable functions. From a physical perspective, $a_{ij}(x),\,b_i(x),\,b^i(x),\,c(x)\;(i,\,j = 1,\,\dots,\,n)$ represent the physical parameters of anisotropic media within the domain $\Omega$, $f(x)$ stands for the source term inside $\Omega$, $h(x)$ corresponds to the source term on the boundary $\partial\Omega$, and $u(x)$ denotes the unknown scalar physical field or a single component of a vector physical field defined over $\Omega$.

If $\mathcal{L}$ is defined by Eq.\,(\ref{eq2}), then $\mathcal{L}u = f$ is referred to as a second-order linear elliptic boundary value problem in divergence form. If $\mathcal{L}$ is given by Eq.\,(\ref{eq3}), then $\mathcal{L}u = f$ is called a second-order linear elliptic boundary value problem in general form. Equation\,(\ref{eq1b}) is named the Dirichlet boundary condition. When $h(x)\equiv0$, it reduces to the homogeneous Dirichlet boundary condition. Equation\,(\ref{eq1b}) can be replaced with Neumann, Robin or mixed boundary conditions. For brevity, this paper only investigates second-order linear elliptic boundary value problems equipped with Dirichlet boundary conditions.
\begin{definition}[Definition of Uniform Ellipticity]\label{defe1}
A second-order linear differential operator $\mathcal{L}$ is said to be uniformly elliptic if there exists a constant $\alpha> 0$ such that
\begin{equation*}
\quad \sum_{i,j=1}^{n} a_{ij}(x)\xi_i\xi_j \ge \alpha |\xi|^2
\end{equation*}
holds for almost every $x\in\Omega$ and all $\xi \in \mathbb{R}^n$.
\end{definition}

Let $A(x)=(a_{ij}(x))_{n\times n}$ denote an $n$-order matrix function defined on the bounded domain $\Omega$. If $\mathcal{L}$ is uniformly elliptic over the bounded domain $\Omega$, then the matrix function $A(x)$ is uniformly positive definite on $\Omega$. Let $\hat{b}(x)=(b^1(x),\,b^2(x),\,\cdots,b^{n}(x))^{T}$ and $b(x)=(b_1(x),\,b_2(x),\,\cdots,\,b_{n}(x))^{T}$ be two $n$-dimensional column vector functions. The gradient of $u$ is defined as $\nabla u=\big(\frac{\partial u}{ \partial x_1},\frac{\partial u}{\partial x_2},\cdots,\frac{\partial u}{\partial x_{n}}\big)^{T}$, which is also an $n$-dimensional column vector function.

When $\mathcal{L}$ is given in divergence form by \eqref{eq2}, the second-order linear uniformly elliptic differential operator $\mathcal{L}$ can be rewritten as
\begin{equation}\label{diver}
\mathcal{L}u=-\nabla\cdot(A(x)\nabla u)+\hat{b}(x)\cdot \nabla u+c(x)u(x),\quad x\in{\Omega}
\end{equation}
where $\nabla\cdot$ stands for the $n$-dimensional divergence operator.

When $\mathcal{L}$ is expressed in the general form \eqref{eq3}, the second-order linear uniformly elliptic differential operator $\mathcal{L}$ can be rewritten as
\begin{equation}\label{diver2}
\mathcal{L}u=-A(x):D^2u+b(x)\cdot\nabla u+c(x)u(x),\quad x\in{\Omega}
\end{equation}
where $D^2u=(u_{x_{i}x_{j}})_{n\times n}$ denotes the Hessian matrix of the $n$-variable function $u$, and the colon operator $:$ represents the Frobenius inner product for two matrices of identical dimension.

If the second-order coefficients satisfy $a_{ij}(x)\in C^1(\Omega)$ for all $i,j = 1, \dots, n$, then the second-order linear elliptic differential operator in divergence form \eqref{diver} can be transformed into the general-form second-order linear elliptic differential operator \eqref{diver2}. In fact, the following identity holds:
\begin{equation}\label{firstordersd}
b(x)=\hat{b}(x)-A(x)\nabla_{x}
\end{equation}
where $\nabla_{x}=\big(\frac{\partial}{\partial x_1},\,\frac{\partial}{\partial x_2},\,\cdots,\,\frac{\partial}{\partial x_n}\big)^{T}$.

Consider the second-order linear elliptic Dirichlet boundary value problem:
\begin{subequations} \label{wellpose}
\begin{numcases}{}
    \mathcal{L}(u(x))-\mu u(x) = f(x) & $x\in \Omega$ \label{wellpose1a} \\
    u(x) = h(x) & $x\in \partial \Omega$ \label{wellpose1b}
\end{numcases}
\end{subequations}
where the constant $\mu\in{\mathbb{C}}$ is known.

The corresponding eigenvalue problem for equation \eqref{wellpose} reads
\begin{subequations} \label{eigen}
\begin{numcases}{}
   \mbox{Seek}\,\lambda\in{\mathbb{C}},~u(x)\not\equiv 0,~\mbox{such that}\nonumber\\
    \mathcal{L}(u(x))=\lambda u(x)  & $x\in \Omega$ \label{eigen1a} \\
    u(x) = 0 & $x\in \partial \Omega$ \label{eigen1b}
\end{numcases}
\end{subequations}
where $\lambda$ and $u(x)$ are referred to as the eigenvalue and eigenfunction of the eigenvalue problem \eqref{eigen} for the second-order linear elliptic differential operator, respectively. The eigenvalue problem \eqref{eigen} corresponds to an eigenvalue problem of a compact operator. From the spectral theory of compact operators \cite{Conway}, the spectrum of \eqref{eigen} consists solely of the point spectrum with neither continuous spectrum nor residual spectrum. This point spectrum is composed of countable complex numbers, and the unique accumulation point is infinity.

Based on the well-posedness theory of second-order linear elliptic differential equations \cite{Evans}, the following theorem holds.

\begin{theorem}[Fredholm Alternative]\label{theorem1}
If $\mu$ is not an eigenvalue of the eigenvalue problem \eqref{eigen}, then for any $f\in H^{-1}(\Omega)$ and $h\in H^{\frac{1}{2}}(\partial\Omega)$, equation \eqref{wellpose} admits a unique weak solution $u\in H^1(\Omega)$ satisfying the following stability estimate
\[
\|u\|_{H^1(\Omega)}\leq C\big(\|f\|_{H^{-1}(\Omega)}+\|h\|_{H^{\frac{1}{2}}(\partial\Omega)}\big)
\]
where $C$ is a positive constant. If $\mu$ is an eigenvalue of the eigenvalue problem \eqref{eigen}, the boundary value problem \eqref{wellpose} either has no solution or possesses non-unique solutions.
\end{theorem}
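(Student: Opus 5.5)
The proof follows the standard Hilbert-space route for the divergence-form operator \eqref{diver}; the general form \eqref{diver2} reduces to it via \eqref{firstordersd} when $a_{ij}\in C^1$. I assume $a_{ij},b^i,c\in L^\infty(\Omega)$.

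\textbf{Step 1: reduce to homogeneous boundary data.} Since $\Omega$ is a bounded Lipschitz domain, the trace operator $\gamma:H^1(\Omega)\to H^{\frac12}(\partial\Omega)$ has a bounded right inverse $E$. Set $w=Eh$, so that
\[
\|w\|_{H^1(\Omega)}\le C\|h\|_{H^{\frac12}(\partial\Omega)} .
\]
Write $u=w+v$ with $v\in H^1_0(\Omega)$. Then \eqref{wellpose} becomes: find $v\in H^1_0(\Omega)$ such that
\[
B_\mu[v,\phi]=\langle f,\phi\rangle-B_\mu[w,\phi]\qquad\text{for all }\phi\in H^1_0(\Omega),
\]
where
\[
B_\mu[v,\phi]=\int_\Omega \big(A\nabla v\cdot\nabla\phi+(\hat b\cdot\nabla v)\phi+(c-\mu)v\phi\big)\,dx .
\]
The right-hand side is a functional $F\in H^{-1}(\Omega)$ with $\|F\|_{H^{-1}}\le C(\|f\|_{H^{-1}}+\|h\|_{H^{\frac12}})$.

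\textbf{Step 2: G\aa rding inequality.} Uniform ellipticity (Definition~\ref{defe1}), boundedness of the coefficients, and Young's inequality give
\[
B_0[v,v]+\gamma_0\|v\|_{L^2}^2\ge \beta\|v\|_{H^1_0}^2
\]
for some $\gamma_0\ge0$ and $\beta>0$. By Lax--Milgram, $L_{\gamma_0}=\mathcal L+\gamma_0$ is an isomorphism $H^1_0\to H^{-1}$. Let $K=L_{\gamma_0}^{-1}$ restricted to $L^2$. Rellich compactness of $H^1_0\hookrightarrow L^2$ makes $K$ a compact operator on $L^2(\Omega)$. The equation $(\mathcal L-\mu)v=F$ is then equivalent to
\[
v-(\gamma_0+\mu)Kv=KF .
\]

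\textbf{Step 3: Fredholm alternative and the estimate.} Apply the Riesz--Schauder Fredholm alternative to $I-(\gamma_0+\mu)K$. Its kernel is trivial exactly when $\mu$ is not an eigenvalue of \eqref{eigen}. In that case the operator is bijective with bounded inverse, which gives existence and uniqueness of $v$, hence of $u$.

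For the estimate, take $\phi=v$ and use G\aa rding to get
\[
\|v\|_{H^1}\le C(\|F\|_{H^{-1}}+\|v\|_{L^2}).
\]
Remove the $L^2$ term either by bounded invertibility on $L^2$ or by the standard contradiction argument: if $\|v_k\|_{L^2}=1$ while the data tend to zero, compactness yields a nonzero eigenfunction, which is impossible. Combining with $\|w\|_{H^1}$ gives the stated bound.

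\textbf{The eigenvalue case.} If $\mu$ is an eigenvalue, the kernel is nontrivial. The range then equals the annihilator of the adjoint kernel, a proper closed subspace. So for some data $(f,h)$ no solution exists, and whenever a solution exists, adding a kernel element gives non-uniqueness.

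\textbf{Main obstacle.} The delicate step is the uniform estimate: bounding $\|v\|_{L^2}$ by the data, since G\aa rding alone does not supply this. It is resolved by the compactness argument above.
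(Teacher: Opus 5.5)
Your proposal is correct and matches what the paper relies on: the paper gives no proof of its own but cites Evans. Evans' argument is the same chain you use: G\aa rding, Lax--Milgram for $\mathcal{L}+\gamma_0$, Rellich compactness of the resolvent on $L^2(\Omega)$, the Riesz--Schauder alternative, and bounded invertibility. Your trace lifting $u=Eh+v$ and the bound $\|KF\|_{L^2}\le C\|F\|_{H^{-1}}$ are the natural additions needed for the statement as written, with $h\in H^{\frac12}(\partial\Omega)$ and $f\in H^{-1}(\Omega)$, rather than the homogeneous, $L^2$-data version Evans states.
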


Comparing \eqref{eq1} with \eqref{wellpose}, one can see that the boundary value problem \eqref{eq1} is a special case of equation \eqref{wellpose} by setting $\mu=0$. Combined with Theorem \ref{theorem1}, we conclude that the second-order linear elliptic Dirichlet boundary value problem \eqref{eq1} is well-posed provided that $0$ is not an eigenvalue of the eigenvalue problem \eqref{eigen}.

The second-order linear elliptic boundary value problem \eqref{eq1} has extremely wide applications in engineering and physics, and it is particularly valuable for problems such as electromagnetic fields, electromagnetic scattering, and acoustic scattering. For instance:
\begin{itemize}
\item \textbf{Electrostatic field problem}: For $n=3$, set $A(x) = \epsilon(x)I_{3}$, where $\epsilon(x)$ denotes the permittivity of isotropic media, $I_{3}$ stands for the $3\times 3$ identity matrix, $b_i(x) = c(x) = 0$, $h(x)=0$, and $f(x)=\rho(x)$ is the given charge density distribution. The governing equation becomes $-\nabla\cdot(\epsilon(x)\nabla u) = \rho(x)$. The physical unknown $u$ represents the electric potential, and Eq. \eqref{eq1} models the potential distribution inside isotropic media when the boundary $\partial\Omega$ of the domain is grounded. In principle, the potential distribution for electrostatic problems on bounded regular three-dimensional domains can be solved analytically via the Green��s function method.

\item \textbf{Static magnetic field problem}: For $n=3$, consider the scalarized formulation of the magnetic vector potential with
$A(x) = \mu^{-1}(x) I_{3}$, where $\mu(x)$ is the magnetic permeability of the medium, $b_i = c = 0$, $h(x)=0$, and $f(x)=J(x)$ denotes the prescribed current source term. The resulting equation reads
\[
-\nabla \cdot \big(\mu^{-1}(x)\nabla u\big) = J(x),
\]
where the unknown $u$ corresponds to a single component of the magnetic vector potential. This equation characterizes the spatial distribution of magnetic fields within isotropic media under specified current excitations and boundary conditions.

\item \textbf{Time-harmonic scalar electromagnetic field problem}: For $n=2$ or $n=3$, consider lossless media without first-order derivative terms. Set
$A(x) = I_n$, $c(x) = -k^2$ with $k$ being the wave number, and $h(x)=0$. The equation reduces to
\[
-\Delta u - k^2 u = f,
\]
where the unknown $u$ denotes a scalar component of the electric or magnetic field. This equation governs the propagation and scattering of electromagnetic waves in the frequency domain, and is widely adopted in waveguide modal analysis and electromagnetic scattering simulations.

\item \textbf{Acoustic radiation and scattering problem}: For $n=2$ or $n=3$, consider homogeneous fluid media with
$A(x) = I_n$, $c(x) = -k^2$, $b_i = 0$, $h(x)=0$, and $f(x)$ representing the acoustic source term. The governing equation takes the same form as that for time-harmonic scalar electromagnetic fields.
The unknown function $u$ stands for the acoustic pressure or velocity potential, and $k = \omega/c_0$ is the acoustic wave number. This model describes the propagation, radiation and scattering of steady-state acoustic waves in the frequency domain, with extensive engineering applications including loudspeaker design, indoor sound field analysis, and underwater acoustic detection.
\end{itemize}

All the above physical problems can be uniformly formulated as second-order elliptic partial differential equations with different coefficient configurations, which demonstrates the unified modeling capability of such equations in electromagnetics, acoustics and continuum mechanics.

\section{The symmetry of the solution to second-order elliptic Dirichlet boundary value problems}
Based on fundamental results from classical group theory, this section explores the symmetry of solutions for second-order linear elliptic Dirichlet boundary value problems. This symmetry stems from the symmetric characteristics of the coefficient function, volume source, and boundary source in Eq.\eqref{eq1}.

Let $\Omega$ be a nonempty bounded domain in the Euclidean space $\mathbb{R}^{n}$, whose boundary $\partial\Omega$ is Lipschitz continuous. We establish an $n$-dimensional Cartesian coordinate system with the geometric center $O$ of $\Omega$ taken as the coordinate origin, and denote its standard orthonormal basis by $\{\mathbf{e}_{1}, \mathbf{e}_{2}, \dots, \mathbf{e}_{n}\}$. For any point $x\in\Omega$, it admits a unique representation $x = \sum_{i=1}^{n} x_i \mathbf{e}_i$, where $(x_1, x_2, \dots, x_n)\in\mathbb{R}^n$ is referred to as the Cartesian coordinates of the point $x$ under this coordinate system. The classical Euclidean norm of a vector $x$ in $\mathbb{R}^{n}$ is defined as $\|x\|=\big(\sum_{k=1}^{n}x_{k}^2\big)^{1/2}$. To simplify notation, we frequently identify the point $x$ with its coordinate vector $(x_1, x_2, \dots, x_n)$ when no ambiguity arises.

\subsection{Symmetry Groups of Domains and Functions}
\indent Let $S(\Omega)$ denote the set consisting of all bijections $f:\Omega\rightarrow\Omega$ defined on $\Omega$. From fundamental classical algebraic theory \cite{sitikejin,artin2014}, all bijections in $S(\Omega)$ form a group under the composition operation $\circ$ of mappings. The group $(S(\Omega),\circ)$ is referred to as the symmetry group on the domain $\Omega$, which is commonly abbreviated as $S(\Omega)$.

\begin{definition}[Isometry Group]
Let $\Omega \subset \mathbb{R}^n$ be a nonempty bounded domain. The set
\begin{equation*}
\operatorname{Sym}(\Omega) = \Big\{ g \in S(\Omega) : \| g(x) - g(y) \| = \| x - y \|,\ \forall\, x, y \in \Omega \Big\}
\end{equation*}
is called the isometry group of the $n$-dimensional bounded domain $\Omega$.
\end{definition}

\begin{lemma}\label{lemma1}
Let $\Omega\subset\mathbb{R}^n$ be a nonempty bounded domain. Then every isometry $g\in\operatorname{Sym}(\Omega)$ admits a unique extension to an isometry $\tilde{g}$ on $\mathbb{R}^n$ of the form
\[
\tilde{g}(x)=A_{\tilde{g}}x+b_{\tilde{g}},
\]
where $A_{\tilde{g}}$ is an $n$-order orthogonal matrix uniquely determined by the extension $\tilde{g}$, $b_{\tilde{g}}$ is an $n$-dimensional real column vector uniquely determined by $\tilde{g}$, and the restriction satisfies $\tilde{g}|_{\Omega}=g$.
\end{lemma}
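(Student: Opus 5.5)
Our approach is the classical Mazur--Ulam-type argument for distance-preserving maps, adapted to a set with nonempty interior. Since $\Omega$ is a nonempty open set, we can choose a point $p\in\Omega$ and affinely independent points $p_0=p,p_1,\dots,p_n\in\Omega$; for instance $p_k=p+\varepsilon\mathbf{e}_k$ with $\varepsilon$ small enough that the ball $B(p,2\varepsilon)\subset\Omega$. The proof then has three steps: show that $g$ is the restriction of an affine isometry on a ball; propagate this to all of $\Omega$; and prove uniqueness.

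\textbf{Step 1: affine structure near $p$.} Define $G(y)=g(p+y)-g(p)$ for $y$ with $p+y\in\Omega$. Then $G(0)=0$ and $\|G(y)-G(z)\|=\|y-z\|$, hence also $\|G(y)\|=\|y\|$. Polarization gives
\[
2\langle G(y),G(z)\rangle=\|G(y)\|^2+\|G(z)\|^2-\|G(y)-G(z)\|^2=2\langle y,z\rangle ,
\]
so $G$ preserves inner products on its domain $D=\Omega-p$. Set $v_k=G(\varepsilon\mathbf{e}_k)$. These vectors are orthogonal with norm $\varepsilon$, so the matrix $Q$ with columns $v_k/\varepsilon$ is orthogonal. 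For any $y\in D$, the inner products $\langle G(y),v_k\rangle=\varepsilon y_k$ determine every coordinate of $G(y)$ in the orthonormal basis $\{v_k/\varepsilon\}$, which yields $G(y)=Qy$. This argument works for every $y\in D$ directly, not only near $0$: inner-product preservation holds between any two points of $\Omega-p$, and the reference points $\varepsilon\mathbf{e}_k$ lie in $D$. Consequently $g(x)=Q(x-p)+g(p)$ for all $x\in\Omega$. We therefore set $A_{\tilde g}=Q$, $b_{\tilde g}=g(p)-Qp$, and $\tilde g(x)=A_{\tilde g}x+b_{\tilde g}$. This is an isometry of $\mathbb{R}^n$ with $\tilde g|_\Omega=g$.

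\textbf{Step 2: uniqueness.} Suppose $A_1x+b_1=A_2x+b_2$ for all $x\in\Omega$. Subtracting the identity at $p$ and at $p+\varepsilon\mathbf{e}_k$ gives $(A_1-A_2)\mathbf{e}_k=0$ for every $k$, so $A_1=A_2$ and then $b_1=b_2$. Any isometric extension of $\mathbb{R}^n$ is affine, by the same Step 1 argument applied with $\Omega$ replaced by $\mathbb{R}^n$. Hence any two isometric extensions agree on the open set $\Omega$ and thus coincide, and the pair $(A_{\tilde g},b_{\tilde g})$ is uniquely determined.

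The main obstacle is Step 1. Specifically, we must make sure that the coordinate reconstruction $G(y)=Qy$ uses only points inside $\Omega$, since $\Omega$ is not assumed convex and $g$ is only defined on $\Omega$. Working with inner products relative to a single base point $p$, rather than with midpoints as in the usual Mazur--Ulam proof, is what avoids any convexity assumption. Only openness is needed, to place the $n+1$ affinely independent reference points in $\Omega$.
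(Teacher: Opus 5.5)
Your proof is correct. The paper does not prove this lemma at all; it cites Burago--Burago--Ivanov's \emph{Metric Geometry}. So the comparison is between a self-contained argument and an appeal to a general metric-geometry result.

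Your key step is polarization about a single base point $p\in\Omega$. It shows that $y\mapsto g(p+y)-g(p)$ preserves inner products on all of $\Omega-p$. The $n$ reference points $p+\varepsilon\mathbf{e}_k$ then force $g(x)=Q(x-p)+g(p)$ on the whole of $\Omega$. No convexity is needed, whereas a midpoint argument would require segments to stay inside $\Omega$.

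Your argument also gives more than the lemma states. Boundedness, connectedness and bijectivity of $g$ are never used; you only need $\Omega$ to contain $n+1$ affinely independent points. Running the same computation with $\Omega=\mathbb{R}^n$ shows every isometry of $\mathbb{R}^n$ is affine, which is exactly what your uniqueness step requires. The citation keeps the paper short, but it leaves implicit why a map defined only on a possibly non-convex domain must be affine; your version makes that explicit.

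Two cosmetic points. First, the announced step of propagating from a ball to all of $\Omega$ is superfluous, since Step 1 already treats every $y\in\Omega-p$. Second, calling the argument Mazur--Ulam-type is a misnomer: it rests on the Euclidean inner-product structure, not on preservation of midpoints.
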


For the proof of the theory related to Lemma\,\ref{lemma1}, see reference \cite{bur2001}. According to the conclusion of Lemma\,\ref{lemma1}, the following theorem can be obtained naturally.
\begin{theorem}\label{thegroyp}
Let $\m{\Omega\subset\mathbb{R}^n}$ be a nonempty bounded domain. Then $\,\operatorname{Sym}(\Omega)\,$ is isomorphic to a subgroup of the $\m{n}$th-order orthogonal matrix group $\m{O_n}$.
\end{theorem}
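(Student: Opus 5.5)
The plan is to build an explicit map $\Phi:\operatorname{Sym}(\Omega)\to O_n$ using Lemma \ref{lemma1}, and then check that it is an injective group homomorphism. Its image is then a subgroup of $O_n$ isomorphic to $\operatorname{Sym}(\Omega)$. For $g\in\operatorname{Sym}(\Omega)$, let $\tilde g(x)=A_{\tilde g}x+b_{\tilde g}$ be the unique isometric extension, and set $\Phi(g)=A_{\tilde g}$. Well-definedness comes directly from the uniqueness in Lemma \ref{lemma1}.

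\textbf{Homomorphism.} Take $g_1,g_2\in\operatorname{Sym}(\Omega)$. The affine map $\tilde g_1\circ\tilde g_2(x)=A_{\tilde g_1}A_{\tilde g_2}x+A_{\tilde g_1}b_{\tilde g_2}+b_{\tilde g_1}$ is an isometry of $\mathbb{R}^n$ that restricts to $g_1\circ g_2$ on $\Omega$. By uniqueness it equals $\widetilde{g_1\circ g_2}$. Comparing linear parts, which is legitimate because an affine map determines its matrix and translation vector, gives $\Phi(g_1\circ g_2)=\Phi(g_1)\Phi(g_2)$.

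\textbf{Injectivity.} This is the main obstacle, and it is where the choice of origin enters. Suppose $\Phi(g)=I$, so that $g(x)=x+b$ on $\Omega$. Then $g$ maps $\Omega$ bijectively onto itself, so $\Omega+b=\Omega$, and hence $\Omega+kb=\Omega$ for every $k\in\mathbb{Z}$. Since $\Omega$ is bounded and nonempty, this forces $b=0$. In more detail: pick $x_0\in\Omega$; then $x_0+kb\in\Omega$ for all $k$, and $\|x_0+kb\|\to\infty$ if $b\neq0$, which is a contradiction. So $g=\mathrm{id}_\Omega$ and the kernel is trivial.

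As an alternative that also shows the translation parts vanish, I would use the centroid $O=0$ of $\Omega$. An isometry of $\mathbb{R}^n$ with $\tilde g(\Omega)=\Omega$ has Jacobian determinant of modulus $1$, so it preserves Lebesgue measure and maps the centroid to the centroid. Thus $\tilde g(0)=0$, which gives $b_{\tilde g}=0$, and $\Phi(g)=A_{\tilde g}$ then determines $g$ completely. Either route gives injectivity. I would present the boundedness argument, since it needs no assumption on how the "geometric center" is defined, and remark that with the centroid as origin every element of $\operatorname{Sym}(\Omega)$ is in fact the restriction of a linear orthogonal map. That remark is convenient for the later sections, where $g$ acts on coefficient functions.

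Concluding, $\Phi$ is an injective homomorphism, so $\operatorname{Sym}(\Omega)\cong\Phi(\operatorname{Sym}(\Omega))\le O_n$. The only delicate points are the appeal to uniqueness of the extension in Lemma \ref{lemma1} and the boundedness argument that rules out pure translations.
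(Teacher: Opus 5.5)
Your proof is correct and follows the route the paper intends. The paper only asserts that the theorem follows from Lemma \ref{lemma1} via $g\mapsto A_{\tilde g}$, and you supply the checks it omits: well-definedness and the homomorphism property from uniqueness of the extension, and a trivial kernel because a bounded nonempty set cannot be invariant under a nonzero translation. Your centroid remark (that $b_{\tilde g}=0$ when the origin is the centroid) is also correct, and it is what the paper tacitly relies on later when it treats elements of $\operatorname{Sym}(\Omega)$ as linear maps $x\mapsto Qx$ and has all mirrors pass through $O$.
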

According to Theorem\,\ref{thegroyp}, to simplify notation, we regard $\m{\operatorname{Sym}(\Omega)}$ as a subgroup of the $\m{n}$th-order orthogonal matrix group $\m{O_n}$, and directly denote this subgroup by $\m{\operatorname{Sym}(\Omega)}$.

\begin{definition}[Symmetry group of a function]
Let $\m{v(x)}$ be an $\,{n}$-variable function defined on a nonempty bounded domain $\m{\Omega}$. The set
\begin{equation}
\operatorname{Sym}(v)=\{Q\in \operatorname{Sym}(\Omega):\ v(Qx)=v(x),\ \forall x\in\Omega\}
\end{equation}
is called the symmetry group of the $\,n$-variable function $\m{v}$.
\end{definition}
By the subgroup criterion, it is easy to prove that $\m{\operatorname{Sym}(v)$ is a subgroup of $\,\operatorname{Sym}(\Omega)}.$

\subsection{Coordinate Transformation and Function Transformation}\label{section3-2}
\indent Introduce the variable substitution $\m{x = Qy}$ and the function transformation $\,{u(x) = u(Qy) = g(y)}$, where $\m{Q=(q_{ij})}$ is an $\m{n}$th-order orthogonal matrix. The main goal of this subsection is to derive the partial differential equation for the unknown function $\m{g(y)}$ from the second-order linear elliptic differential boundary value problem\,(\ref{eq1})\,with unknown function $\m{u(x)}$, based on the general form (\ref{eq3}) of the operator $\m{\mathcal{L}}$.\\
\indent Let $\,x,y\in\mathbb{R}^n$, and suppose that there exists an $\m{n}$th-order orthogonal matrix $\m{Q}$ such that $\,x = Qy$.
Then $\m{y = Q^T x}$ holds. At this time, the components of $\m{y}$ can be expressed as $\,y_k=\sum_{j=1}^{n}q_{jk}x_j$. Differentiating this with respect to $\m{x_i}$ gives $\frac{\partial y_k}{\partial x_i} = q_{ik}$. On this basis, by the multivariable chain rule, the transformation formula for the first derivative is
\begin{equation}\label{eq4}
	\frac{\partial u}{\partial x_i} = \sum_{k=1}^n q_{ik}\frac{\partial g}{\partial y_k}
\end{equation}
and the transformation formula for the second derivative is
\begin{equation}\label{eq5}
	\frac{\partial^2 u}{\partial x_i \partial x_j} = \sum_{k,l=1}^n q_{ik}q_{jl}\frac{\partial^2 g}{\partial y_k \partial y_l}
\end{equation}
Substituting\,(\ref{eq4})-(\ref{eq5})\,and $\m{x=Qy}$ into\,(\ref{eq3}) gives
\begin{equation}\label{eq6}
-\sum_{i,j=1}^{n}a_{ij}(Qy)\Bigg(\sum_{k,l=1}^{n} q_{ik}q_{jl}\frac{\partial^2 g}{\partial y_k \partial y_l}\Bigg) + \sum_{i=1}^{n}b_i(Qy) \sum_{k=1}^{n} q_{ik}\frac{\partial g}{\partial y_k} + c(Qy)g(y)=f(Qy)
\end{equation}
Exchanging the order of summation in equation\,(\ref{eq6}) gives
\begin{equation}\label{eq7}
-\sum_{k,l=1}^{n} \left[ \sum_{i,j=1}^{n} a_{ij}(Qy)q_{ik}q_{jl} \right] \frac{\partial^2 g}{\partial y_k \partial y_l} + \sum_{k=1}^{n} \left[ \sum_{i=1}^{n} b_i(Qy)q_{ik} \right] \frac{\partial g}{\partial y_k} + c(Qy)g(y)=f(Qy)
\end{equation}
In equation\,(\ref{eq7}), define the coefficient functions under the new variable $\m{y}$ by
\begin{equation}\label{effci1}
	\begin{aligned}
		\tilde{a}_{kl}(y) = \sum_{i,j=1}^{n} a_{ij}(Qy)q_{ik}q_{jl},\quad \tilde{b}_k(y) = \sum_{i=1}^{n} b_i(Qy)q_{ik},\quad \tilde{c}(y)=c(Qy), \quad\tilde{f}(y)=f(Qy)		
	\end{aligned}
\end{equation}
Using the rules of matrix multiplication in linear algebra, the first two expressions in\,\eqref{effci1}\,can be further arranged into the following matrix form:
\begin{equation}\label{effci2}
	\tilde{A}(y) = Q^T A(Qy)Q, \quad \tilde{b}(y)=Q^T b(Qy)
\end{equation}
Here $\m{\tilde{A}(y)=(\tilde{a}_{kl}(y))}$ is an $\m{n}$th-order square matrix function, and $\m{\tilde{b}(y)=(\tilde{b}_{k}(y))}$ is an $\m{n}$-dimensional column-vector function.

Substituting\,\eqref{effci1}\,into equation\,\eqref{eq7} gives
\begin{equation}\label{eq8}
 -\sum_{k,l=1}^{n}\tilde{a}_{kl}(y) \frac{\partial^2 g}{\partial y_k \partial y_l} + \sum_{k=1}^{n} \tilde{b}_k(y) \frac{\partial g}{\partial y_k} + \tilde{c}(y)g(y)=\tilde{f}(y)
\end{equation}

Let the operator under the new coordinate system be
\begin{equation}
	\tilde{\mathcal{L}} = -\sum_{i,j=1}^{n} \tilde{a}_{ij}(y) \frac{\partial^2}{\partial y_i \partial y_j} + \sum_{i=1}^{n} \tilde{b}_i(y) \frac{\partial}{\partial y_i} + \tilde{c}(y)
\end{equation}

Since $\m{\mathcal{L}}$ is uniformly elliptic, $\m{A(x)}$ is a uniformly positive definite matrix function, and hence $\m{A(Qy)}$ is also a uniformly positive definite matrix function. According to\,(\ref{effci2}), $\m{\tilde{A}(y)}$ is also a uniformly positive definite matrix function, and consequently $\m{\tilde{\mathcal{L}}}$ is also uniformly elliptic. Under the variable substitution $\m{x = Qy}$ and the function transformation $\m{u(x) = g(y)}$, the second-order elliptic boundary value problem\,(\ref{eq1})\,can be transformed into
\begin{subequations} \label{eq9}
\begin{numcases}{}
    \mathcal{\tilde L}(g(y)) = \tilde{f}(y) & $y\in \Omega$ \label{eq9a} \\
    g(y) = \tilde{h}(y) & $y\in \partial \Omega$ \label{eq9b}
\end{numcases}
\end{subequations}
where $\tilde{h}(y)=h(Qy)$.

\begin{remark}
According to the above argument, the following conclusion can be obtained directly: the classical $\m{n}$-dimensional\,Laplace\,operator remains invariant under orthogonal transformations, that is, it has orthogonal invariance.
\end{remark}

\subsection{Symmetry Groups of the Second-Order Linear Elliptic Operator and Sources}
Using transformation groups to study the invariance of differential equations is a basic idea in symmetry analysis of second-order linear partial differential equations\cite{Olver1993}.
This section discusses the invariance of equation\,(\ref{eq1})\,under orthogonal transformations, and it is necessary to impose corresponding symmetry conditions on the coefficient functions appearing in the equation. According to the transformation relations\,\eqref{effci1}-(\ref{effci2}), the symmetry groups of the coefficients of the second-order linear elliptic differential operator $\m{\mathcal{L}}$ in its general form\,\eqref{eq3}\,are readily obtained.

The symmetry group of the second-order coefficient matrix function $\,A(x)\,$ is
\begin{equation}\label{coffer1}
\operatorname{Sym}(A)
=
\{Q\in \operatorname{Sym}(\Omega):\ Q^TA(Qx)Q=A(x),\ \forall x\in\Omega\}
\end{equation}

The symmetry group of the first-order coefficient column-vector function $\,b(x)\,$ is
\begin{equation}\label{coffer2}
\operatorname{Sym}(b)
=
\{Q\in \operatorname{Sym}(\Omega):\ Q^Tb(Qx)=b(x),\ \forall x\in\Omega\}
\end{equation}

The symmetry group of the zeroth-order coefficient function $\,c(x)\,$ is
\begin{equation}\label{coffer3}
\operatorname{Sym}(c)=\{Q\in \operatorname{Sym}(\Omega):\ c(Qx)=c(x),\ \forall x\in\Omega\}
\end{equation}

\begin{definition}[Symmetry group of the uniformly elliptic differential operator $\m{\mathcal{L}}$ in general form]\label{defineL1}
For a given uniformly elliptic linear differential operator $\,\mathcal{L}\,$ in general form, define
\begin{equation*}
\operatorname{Sym}(\mathcal{L})
=
\operatorname{Sym}(A)
\cap
\operatorname{Sym}(b)
\cap
\operatorname{Sym}(c)
\end{equation*}
as the symmetry group of the uniformly elliptic linear differential operator $\,\m{\mathcal{L}}\,$ in general form.
\end{definition}
\indent By the subgroup criterion, it is easy to prove that $\,\operatorname{Sym}(A),\,\operatorname{Sym}(b),\,\operatorname{Sym}(c)\,$ are all subgroups of $\,\operatorname{Sym}(\Omega)\,$; hence the symmetry group $\,\operatorname{Sym}(\mathcal{L})\,$ of the uniformly elliptic differential operator in general form is also a subgroup of $\,\operatorname{Sym}(\Omega)\,$.

Next consider the symmetry group of the uniformly elliptic differential operator $\m{\mathcal{L}}$ in divergence form. The uniformly elliptic differential operator $\m{\mathcal{L}}$ differs between general form and divergence form only in the coefficient function of the first-order term. Consider the orthogonal transformation $\,x=Qy$, where $\,Q\in{\operatorname{Sym}(A)}\cap\operatorname{Sym}(b)$. Then $\,\nabla_{x}=Q\nabla_{y}$, where $\,\nabla_{y}=(\frac{\partial~}{\partial y_1},\,\frac{\partial~}{\partial y_2},\,\cdots,\,\frac{\partial~}{\partial y_n})^{T}$. Setting $\,x=Qy$ in\,\eqref{firstordersd}\,gives
\begin{equation}\label{firstordery}
b(Qy)=\hat{b}(Qy)-A(Qy)Q\nabla_{y},\ \forall y\in\Omega
\end{equation}
Since $\,Q\in{\operatorname{Sym}(A)}$, we have
\begin{equation}\label{deduce1}
 Q^{T}A(Qy)Q=A(y),\ \forall y\in\Omega
\end{equation}
Since $\,Q\in{\operatorname{Sym}(b)}$, we have
\begin{equation}\label{deduce2}
Q^{T}b(Qy)=b(y),\ \forall y\in\Omega
\end{equation}
Substituting\,\eqref{deduce1}-\eqref{deduce2}\,into\,\eqref{firstordery} yields
\begin{equation}\label{firstor}
b(y)=Q^{T}\hat{b}(Qy)-A(y)\nabla_{y},\ \forall y\in\Omega
\end{equation}
Replacing $\,y\,$ in\,\eqref{firstor}\,by $\,x\,$ and then comparing this expression with\,\eqref{firstordersd}\,one obtains that the symmetry group of the first-order coefficient column-vector function $\,\hat{b}(x)\,$ in divergence form is
\begin{equation}\label{divcoffer2}
\operatorname{Sym}(\hat{b})
=
\{Q\in \operatorname{Sym}(\Omega):\ Q^T\hat{b}(Qx)=b(x),\ \forall x\in\Omega\}
\end{equation}
\begin{definition}[Symmetry group of the uniformly elliptic differential operator $\m{\mathcal{L}}$ in divergence form]\label{defineL2}
For a given uniformly elliptic linear differential operator $\mathcal{L}$ in divergence form, define
\begin{equation*}
\operatorname{Sym}(\mathcal{L})
=
\operatorname{Sym}(A)
\cap
\operatorname{Sym}(\hat{b})
\cap
\operatorname{Sym}(c)
\end{equation*}
as the symmetry group of the uniformly elliptic linear differential operator $\m{\mathcal{L}}$ in divergence form.
\end{definition}

According to Definition \ref{defineL1} and Definition \ref{defineL2}, the definitions of the symmetry group $\m{\operatorname{Sym}{(\mathcal{L})}}$ of the uniformly elliptic linear differential operator in divergence form and in general form are the same.

For the source $\m{f}$ inside the domain $\m{\Omega}$ and the source $\,{h}$ on the boundary $\,{\partial\Omega}$, their symmetry groups are defined respectively by
\begin{gather*}
\operatorname{Sym}(f)=\{Q\in \operatorname{Sym}(\Omega):\ f(Qx)=f(x),\ \forall x\in\Omega\}\\
\operatorname{Sym}(h)=\{Q\in \operatorname{Sym}(\Omega):\ h(Qx)=h(x),\ \forall x\in\partial\Omega\}
\end{gather*}
\begin{definition}[Symmetry group of the total source]\label{definesouce}
For the source $\m{f}$ inside the domain $\m{\Omega}$ and the source $\,{h}$ on the boundary $\m{\partial\Omega}$, define
\begin{equation*}
\operatorname{Sym}(f,h)
=
\operatorname{Sym}(f)
\cap
\operatorname{Sym}(h)
\end{equation*}
as the symmetry group of the total source.
\end{definition}

\indent By the subgroup criterion, it is easy to prove that $\,\operatorname{Sym}(f)\,$ and $\,\operatorname{Sym}(h)\,$ are both subgroups of $\,\operatorname{Sym}(\Omega)\,$; hence the symmetry group $\,\operatorname{Sym}(f,h)\,$ of the total source is also a subgroup of $\,\operatorname{Sym}(\Omega)\,$.

\subsection{Solution Symmetry Theorem}
For the well-posed second-order linear elliptic\,Dirichlet\,boundary value problem\,\eqref{eq1}, if an orthogonal transformation simultaneously leaves the domain, coefficient functions, source term, and boundary condition invariant,
then the transformed function still satisfies the same boundary value problem. By uniqueness of the solution, it follows that the solution inherits the common symmetry group of the equation coefficients.
\begin{theorem}[Solution symmetry theorem]\label{thm:symmetry_solution}
Assume that the second-order linear elliptic\,Dirichlet\,boundary value problem\,\eqref{eq1}\,is well-posed, and let $G=\operatorname{Sym}(\mathcal{L})\cap \operatorname{Sym}(f,h)$. Then
$G\subseteq \operatorname{Sym}(u)$, that is, the solution $\,u(x)\,$ inherits at least the common symmetry of the equation data.
\end{theorem}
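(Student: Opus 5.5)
Fix an arbitrary $Q\in G$ and set $g(y)=u(Qy)$ for $y\in\Omega$. The plan is to show that $g$ solves the same boundary value problem \eqref{eq1} as $u$, and then invoke uniqueness from Theorem \ref{theorem1} with $\mu=0$ to conclude $g=u$, that is, $u(Qx)=u(x)$ for all $x\in\Omega$, i.e.\ $Q\in\operatorname{Sym}(u)$. Since $Q$ is arbitrary, this gives $G\subseteq\operatorname{Sym}(u)$.

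\textbf{Step 1 (domain invariance).} Since $Q\in\operatorname{Sym}(\Omega)$, identified with an orthogonal matrix via Lemma \ref{lemma1} and Theorem \ref{thegroyp}, the map $y\mapsto Qy$ is a bijection of $\Omega$. By continuity of the extension $\tilde g$ it also maps $\overline\Omega$ onto itself, hence $\partial\Omega$ onto $\partial\Omega$. Therefore $g$ is defined on the same domain, and its boundary values are taken on $\partial\Omega$.

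(Strictly, $\tilde g(x)=Qx+b$; taking the origin at the geometric center makes $b=0$. I will note this, or carry the translation through harmlessly, since the chain rule is unaffected by it.)

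\textbf{Step 2 (transformed equation).} By the computation of Subsection \ref{section3-2}, $g$ satisfies \eqref{eq9} with coefficients $\tilde A(y)=Q^TA(Qy)Q$, $\tilde b(y)=Q^Tb(Qy)$, $\tilde c(y)=c(Qy)$, and data $\tilde f(y)=f(Qy)$, $\tilde h(y)=h(Qy)$. Membership $Q\in\operatorname{Sym}(A)\cap\operatorname{Sym}(b)\cap\operatorname{Sym}(c)$ gives $\tilde A=A$, $\tilde b=b$, $\tilde c=c$, so $\tilde{\mathcal L}=\mathcal L$. Membership $Q\in\operatorname{Sym}(f)\cap\operatorname{Sym}(h)$ gives $\tilde f=f$ and $\tilde h=h$. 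Hence $g$ solves \eqref{eq1}. For the divergence form, Definition \ref{defineL2} is stated to coincide with Definition \ref{defineL1}, so the same argument applies.

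\textbf{Step 3 (weak setting; the main obstacle).} Theorem \ref{theorem1} gives uniqueness only for weak solutions in $H^1(\Omega)$, while the pointwise chain rule in Step 2 presumes a classical $u$. I would therefore redo Step 2 at the weak level, as follows.

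First, composition with an orthogonal map preserves $H^1(\Omega)$, with $\nabla g(y)=Q^T(\nabla u)(Qy)$, and it preserves $H^1_0(\Omega)$ and traces on $\partial\Omega$.

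Second, in the bilinear form
\[
B(u,v)=\int_\Omega A\nabla u\cdot\nabla v+\hat b\cdot\nabla u\,v+cuv\,dx,
\]
I will substitute $x=Qy$ (Jacobian $|\det Q|=1$) with test function $v(x)=w(Q^Tx)$. Using the invariances of the coefficients, this yields $B(g,w)=\langle f,w\rangle$ for all $w\in H^1_0(\Omega)$, where $f\in H^{-1}$ is understood through the pullback, whose invariance follows from $f(Qx)=f(x)$. The trace of $g$ equals $h\circ Q=h$.

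Finally, uniqueness from Theorem \ref{theorem1} forces $g=u$ almost everywhere, which is the required symmetry.
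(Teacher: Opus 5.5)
Your proposal is correct and follows the same route as the paper's proof. Fix $Q\in G$, use the change-of-variables formulas of Subsection~\ref{section3-2} together with the invariance conditions defining $G$ to show that $u\circ Q$ solves the same Dirichlet problem, and conclude $u\circ Q=u$ by uniqueness. Your Steps 1 and 3 are refinements the paper's proof omits: you check that $Q$ maps $\partial\Omega$ onto itself, and you redo the computation in the weak formulation, so that the uniqueness in Theorem~\ref{theorem1} (which covers only $H^1$ weak solutions) genuinely applies, whereas the paper argues with the pointwise chain rule alone.
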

\begin{proof}
Let $\m{u}$ be the unique solution of the following well-posed second-order linear elliptic\,Dirichlet\,boundary value problem
\begin{subequations}\label{pwhole}
\begin{numcases}{}
   \mathcal{L}u=-A(x):D^2u(x)+b(x)\cdot\nabla u(x)+c(x)u(x) = f(x) & $x\in \Omega$ \label{pwholea} \\
    u(x) = h(x) & $x\in \partial \Omega$ \label{pwholeb}
\end{numcases}
\end{subequations}
Take any $\,Q\in G$, and define the transformed function $\,u_Q(x)=u(Qx)$. Since $\,Q\in\operatorname{Sym}(\Omega)$, we have $\,Q\Omega=\Omega$; hence $\,u_Q\,$ is still defined on the domain $\m{\Omega}$. According to Section\,\ref{section3-2}, $u_{Q}\,$ satisfies the following second-order linear elliptic\,Dirichlet\,boundary value problem:
\begin{subequations}\label{ppwhole}
\begin{numcases}{}
   \begin{aligned}
   &-Q^{T}A(Qx)Q:D^2u_{Q}(x)
   +Q^{T}b(Qx)\cdot\nabla u_{Q}(x)  \\
   &\quad +c(Qx)u_{Q}(x)=f(Qx)
   \end{aligned}
   &$x\in \Omega$ \label{ppwholea} \\[0.5em]
   u_{Q}(x)=h(Qx) &$x\in \partial \Omega$ \label{ppwholeb}
\end{numcases}
\end{subequations}
Since $\,Q\in{G}$, we have
\begin{gather}
  Q^{T}A(Qx)Q=A(x),~~Q^{T}b(Qx)=b(x),~~c(Qx)=c(x),~~\forall x\in{\Omega} \label{iterative1}\\
  f(Qx)=f(x),~~h(Qx)=h(x),~~\forall x\in{\Omega}\label{iterative2}
\end{gather}
Substituting\,\eqref{iterative1}-\eqref{iterative2}\,into\,\eqref{ppwhole}, $\m{u_{Q}}$ satisfies the following second-order linear elliptic\,Dirichlet\,boundary value problem:
\begin{subequations}\label{swhole}
\begin{numcases}{}
   \mathcal{L}u_{Q}=-A(x):D^2u_{Q}(x)+b(x)\cdot\nabla u_{Q}(x)+c(x)u_{Q}(x) = f(x) & $x\in \Omega$ \label{swholea} \\
    u_{Q}(x) = h(x) & $x\in \partial \Omega$ \label{swholeb}
\end{numcases}
\end{subequations}
Comparing\,\eqref{pwhole}\,and\,\eqref{swhole}, we see that $u\,$ and $\,u_Q\,$ satisfy the same equation and boundary condition. Since the boundary value problem\,\eqref{pwhole}\,is well-posed, we have $\,u(Qx)=u_Q(x)=u(x),\,\forall x\in\Omega$, and therefore $\,Q\in\operatorname{Sym}(u)$.
Since $\,Q\in{G}\,$ is arbitrary, $\,G\subseteq \operatorname{Sym}(u)$. This completes the proof.
\end{proof}

\indent Although the above proof is for the general form of the second-order linear elliptic\,Dirichlet\,boundary value problem, the same proof idea also applies to the divergence form of the second-order linear elliptic\,Dirichlet\,boundary value problem, and the details are omitted here.

\begin{remark}\label{remake1}
When the isometry group $\m{\text{Sym}(\Omega)}$ of the $\m{n}$-dimensional bounded domain $\m{\Omega}$ degenerates into the trivial group, the symmetry group $\m{\text{Sym}(u)}$ of the solution of equation\,(\ref{eq1})\,is also the trivial group; in this case the solution $\m{u(x)}$ has no symmetry.
\end{remark}
\begin{remark}\label{remake2}
If the common symmetry group $\,\text{Sym}(\mathcal{L})\cap\text{Sym}(f,g)\,$ of equation\,\eqref{eq1}\,is the isometry group $\m{\text{Sym}(\Omega)}$ of the $\m{n}$-dimensional bounded domain $\m{\Omega}$, then the solution symmetry group of equation\,(\ref{eq1})\,is $\m{\text{Sym}(u)}=\text{Sym}(\Omega)$; in this case the solution $\m{u(x)}$ has full symmetry.
\end{remark}

\section{Finite Element Method Based on Domain-Reduction}
\indent When the common symmetry group $\,\operatorname{Sym}(\mathcal{L})\cap\text{Sym}(f,h)\,$ of the second-order linear elliptic\,Dirichlet\,boundary value problem\,\eqref{eq1}\,contains several reflection-symmetry elements, the original boundary value problem on the whole domain $\,\Omega\,$ can be reduced to the corresponding problem on a certain subdomain, and a generalized homogeneous\,Neumann\,boundary condition appears on the boundary of that subdomain. For the case where $\m{\Omega\subset\mathbb{R}^{n}\,(n=2,3)}$ is a bounded domain, this section uses a linear finite element method to numerically solve the reduced boundary value problem on the subdomain, thereby realizing domain reduction and significantly reducing the computational effort.
\subsection{Domain-Reduced Second-Order Linear Elliptic Equation}
\indent Compared with the general form, the divergence form of a second-order linear elliptic boundary value problem has a more concise variational structure. Therefore, this section adopts the boundary value problem in divergence form as the mathematical model, stated specifically as follows:
\begin{subequations}\label{domain}
\begin{numcases}{}
    \mathcal{L}u=-\nabla\cdot(A(x)\nabla u)+\hat{b}(x)\cdot \nabla u+c(x)u(x) = f(x) & $x\in \Omega$ \label{domaina} \\
    u(x) = h(x) & $x\in \partial \Omega$ \label{domainb}
\end{numcases}
\end{subequations}
where $\m{A(x)}$ is uniformly positive definite in $\m{\Omega}$. Assume that the boundary value problem\,\eqref{domain}\,satisfies the conditions of Theorem\,\ref{theorem1}. Then the boundary value problem\,\eqref{domain}\,is well-posed.

\indent Suppose that the common symmetry group $\,\operatorname{Sym}(\mathcal{L})\cap\text{Sym}(f,h)$ of the second-order linear elliptic boundary value problem\,\eqref{domain}\,contains $\,m\,$ reflection-symmetry elements, and denote the corresponding mirrors by $\,\{\Pi_{1},\,\Pi_{2},\,\cdots,\,\Pi_{m}\}$. By Theorem\,\ref{thm:symmetry_solution}, the solution symmetry group $\,\text{Sym}(u)\,$ also contains these $\,m\,$ reflection-symmetry elements. Clearly, these $\m{m}$ mirrors $\,\Pi_1, \Pi_2, \cdots, \Pi_m\,$ all pass through the geometric center $\,O\,$ of the domain $\,\Omega\,$. Let the unit normal vector of the mirror $\m{\Pi_k}$ be the unit column vector $\,\boldsymbol{n}_k$ in $\,\mathbb{R}^{n}\,$. Then the reflection matrix corresponding to the mirror-reflection transformation on the mirror $\,\Pi_{k}\,$ is $\,Q_k = I - 2\boldsymbol{n}_k \boldsymbol{n}_k^{T}$. Consequently, for any $\,k\in\{1,2,\cdots,m\}$, one has $\,Q_{k}\in \operatorname{Sym}(\mathcal{L})\cap\text{Sym}(f,h)$.

Suppose that these $\m{m}$ mirrors $\,\{\Pi_{1},\,\Pi_{2},\,\cdots,\,\Pi_{m}\}\,$ divide the $\m{n}$-dimensional bounded domain $\m{\Omega}$ into $\m{s}$ mutually disjoint congruent subdomains, denoted by $\,\Omega_1,\,\Omega_2,\,\dots,\,\Omega_{s}$, respectively. Then the second-order linear elliptic boundary value problem\,\eqref{domain}\,on the whole domain $\m{\Omega}$ can be reduced to the corresponding problem on any subdomain $\,\Omega_{j},\,j\in\{1,2,\cdots,s\}$. Without loss of generality, take $\,j=1$ and consider only the second-order linear elliptic differential boundary value problem on $\,\Omega_{1}\,$. The boundary $\,\partial\Omega_{1}\,$ of the subdomain $\,\Omega_{1}\,$ consists of the following two types: the first is a portion $\,\Gamma_{0}=\partial\Omega\cap\partial\Omega_{1}$ of the boundary $\,\partial\Omega\,$ of the whole domain $\m{\Omega}$; the second is the internal boundary $\,\Gamma_{k}=\partial\Omega_{1}\cap\Pi_{k}$ inside the domain $\,\Omega\,$, where $\m{k}$ belongs to the index set $\,\mathcal{I} = \{ k \in \{1,\dots,m\} \mid \partial\Omega_1 \cap \Pi_k \neq \varnothing\}$.

We next give the boundary condition satisfied by the solution $\,u(x)\,$ of equation\,\eqref{domain}\,on $\m{\Gamma_{k}\,(k\in{\mathcal{I}})}$. Since the symmetry group $\,\text{Sym}(u)\,$ contains at least the above $\,m\,$ reflection-symmetry elements, one has $u(Q_{k}x)=u(x),\,\forall x\in{\Omega}$. In particular, if $\,x\in{\Gamma_{k}}\,(k\in{\mathcal{I}})$ and $\,x\pm t\boldsymbol{n}_k\in{\Omega}$ for some $\,t\in{\mathbb{R}}\,$, then
\begin{equation}\label{symmetrypo}
 u(x+t\boldsymbol{n}_k)=u(x-t\boldsymbol{n}_k)
\end{equation}
By the definition of the normal derivative, one has
\begin{equation}\label{differntial}
\nabla u\cdot \boldsymbol{n}_k=\frac{\partial u}{\partial \boldsymbol{n}_k}=\lim\limits_{t \to 0^{+}}\frac{u(x+t\boldsymbol{n}_k)-u(x)}{t}=\lim\limits_{t \to 0^{+}}\frac{u(x)-u(x-t\boldsymbol{n}_k)}{t}
\end{equation}
By\,\eqref{symmetrypo}-\eqref{differntial} and the arithmetic properties of limits, it follows that
\begin{equation}\label{differntia2}
\nabla u\cdot \boldsymbol{n}_k=\frac{\partial u}{\partial \boldsymbol{n}_k}=\lim\limits_{t \to 0^{+}}\frac{u(x+t\boldsymbol{n}_k)-u(x-t\boldsymbol{n}_k)}{2t}=0
\end{equation}
From\,\eqref{differntia2}\,we know that $\nabla u\in{\Gamma_{k}}$; hence
\begin{equation}\label{gradus}
  Q_{k}\nabla u(x)=\nabla u(x),\qquad \forall\,x\in{\Gamma_{k}}
\end{equation}

Since $\,Q_{k}\in{\text{Sym}(A)}$, one has $\,A(x)=Q_{k}^{T}A(Q_{k}x)Q_{k},\,\forall x\in{\Omega}$.
When $\,x\in \Gamma_k\,$, one has $\,Q_{k}x=x$. Therefore,
\begin{equation}\label{symmetryyq}
  Q_{k}A(x)=A(x)Q_{k},\qquad \forall\,x\in{\Gamma_{k}}
\end{equation}
According to\,\eqref{gradus}-\eqref{symmetryyq}, when $\,x\in{\Gamma_{k}}$, one has
\begin{equation}\label{fluxb}
  Q_{k}(A(x)\nabla u(x))=(Q_{k}A(x))\nabla u(x)
=(A(x)Q_{k})\nabla u(x)=A(x)(Q_{k}\nabla u(x))
=A(x)\nabla u(x)
\end{equation}
By\,\eqref{fluxb}\,for any $\,x\in{\Gamma_{k}}$, the vector field $\,A(x)\nabla u(x)\,$ remains invariant under the reflection transformation with respect to the mirror $\,\Gamma_{k}\,$. Thus $\,A(x)\nabla u(x)\in{\Gamma_{k}}\,$. Since $\,\boldsymbol{n}_k\,$ is the unit normal vector at the mirror $\,\Gamma_{k}\,$, one obtains
\begin{equation}\label{symmetryb}
(A(x)\nabla u)\cdot \boldsymbol{n}_k=0,\qquad \forall\,x\in \Gamma_k
\end{equation}
Equation\,\eqref{symmetryb}\,is the generalized homogeneous\,Neumann\,boundary condition for the second-order linear elliptic boundary value problem.

The governing equation of the second-order elliptic differential boundary value problem on the subdomain $\,\Omega_{1}$ is
\begin{subequations}\label{domain1}
\begin{numcases}{}
    -\nabla\cdot(A(x)\nabla u_{1})+\hat{b}(x)\cdot \nabla u_1+c(x)u_1(x) = f(x) & $x\in \Omega_{1}$ \label{domain1a} \\
    u_{1}(x) = h(x) & $x\in \Gamma_{0}$ \label{domain1b}\\
    (A(x)\nabla u_{1})\cdot\boldsymbol{n}_k = 0 & $x\in \Gamma_{k},~~k\in{\mathcal{I}}$ \label{domain1c}
\end{numcases}
\end{subequations}
where $\,\Gamma_{0}=\partial\Omega_{1}\cap\partial\Omega\,$ is a part of $\,\partial\Omega\,$ inherited from the boundary of the original domain $\,\Omega\,$; for $\,k\in{\mathcal{I}}$, $\Gamma_{k}=\Pi_{k}\cap\partial\Omega_{1}\,$ is a boundary appearing inside $\m{\Omega}$. Boundary condition\,\eqref{domain1b}\,is an essential boundary condition in the finite element method and must be imposed strongly in finite element analysis; boundary condition\,\eqref{domain1c}\,is a natural boundary condition in the finite element method and need not be imposed strongly in finite element analysis.

From the above discussion, it is clear that $\,u|_{\Omega_{1}}=u_{1}$. The solutions $\,u|_{\Omega_{k}}\,$ on the other subdomains $\,\Omega_{k}\,(2\leq k\leq s)\,$ can be transformed into the solution $\,u_{1}$ on the subdomain $\,{\Omega_{1}}$ by several reflection-symmetry transformations. The core of the domain-reduction model is to identify all reflection-symmetry elements contained in the common symmetry group $\,\operatorname{Sym}(\mathcal{L})
\cap\operatorname{Sym}(f,h)\,$. These symmetry elements can then be used to implement the domain-reduction finite element method, thereby significantly reducing the computational effort.

\subsection{Linear Finite Element Discretization}
This section discusses only the linear finite element discretization of the second-order linear elliptic boundary value problem\,(\ref{domain1}). The discretization scheme for problem\,(\ref{domain})\,is completely analogous, differing only in the finite element treatment of the boundary conditions. Since the finite element method is essentially a numerical method based on the variational principle, it is necessary first to give the variational form of\,(\ref{domain1}). The following infinite-dimensional\,Hilbert\,spaces are introduced when defining the variational form:
\begin{gather*}
  L^2(\Omega_1)=\{v:\,\int_{\Omega_1}|v|^2dx<+\infty\} \
  H^{1}(\Omega_1)=\big\{v\in{L^2(\Omega_1)}:\,D^{\alpha}v\in{L^2(\Omega_1)},\,|\alpha|\leq1\big\}
\end{gather*}
where $\,\alpha = (\alpha_1, \dots, \alpha_n)\in \mathbb{N}^n\,$ is a multi-index, $|\alpha| = \alpha_1 + \cdots + \alpha_n$, and $\,D^{\alpha} v\,$ denotes the $\,\alpha\,$th-order weak derivative of $\,v\,$. In particular, when $\,|\alpha| = 0\,$, $D^{\alpha} v = v$; when $\,|\alpha| = 1\,$, $D^{\alpha} v = \partial v / \partial x_i\,$ is a first-order weak derivative of $\,v\,$.

The function space for test functions is $$\,H_{\Gamma_{0}}^1(\Omega_1)=\big\{v\in H^1(\Omega_1):\,v|_{\Gamma_{0}}=0\big\}$$
This is an infinite-dimensional\,Hilbert\,space. Take any test function $\,v\in H_{\Gamma_{0}}^1(\Omega_1)$. Multiplying both sides of equation\,\eqref{domain1a}\,by the test function $\,v$ and integrating over the domain $\,\Omega_1$, one obtains
\begin{equation}\label{fem1}
\int_{\Omega_1}
\left[
-\nabla\cdot(A(x)\nabla u_1)
+\hat{b}\cdot\nabla u_1
+cu_1
\right]v\,dx
=
\int_{\Omega_1} fv\,dx
\end{equation}
Applying the\,Green\,formula to the second-order term in\,(\ref{fem1})\,gives
\begin{equation}\label{fem2}
-\int_{\Omega_1} \nabla\cdot(A(x)\nabla u_{1})v\,dx
=
\int_{\Omega_1} A(x)\nabla u_{1}\cdot\nabla v\,dx
-
\int_{\partial\Omega_1} ((A(x)\nabla u_{1})\cdot \boldsymbol{n})v\,dS
\end{equation}
According to\,\eqref{domain1c}\,and $\,{v(x)=0}$ on the boundary $\m{\Gamma_{0}}$, one has
\begin{equation}\label{femneu2}
\int_{\partial\Omega_1} ((A(x)\nabla u_{1})\cdot \boldsymbol{n})v\,dS=\int_{\Gamma_{0}} ((A(x)\nabla u_{1})\cdot \boldsymbol{n}_0)v\,dS+\sum_{k\in{\mathcal{I}}}\int_{\Gamma_{k}} ((A(x)\nabla u_{1})\cdot \boldsymbol{n}_k)v\,dS=0
\end{equation}
According to\,\eqref{femneu2}\,the boundary integral term in\,(\ref{fem2})\,vanishes. Substituting\,\eqref{fem2}\,into\,\eqref{fem1}
gives the variational form of the boundary value problem\,\eqref{domain1}:

Find $\,u\in \{g(x)\in{H^{1}(\Omega_1)}:\, g(x)=h(x)\,\text{on the boundary}\,\Gamma_{0}\,\text{}\}$ such that
\begin{equation}\label{vara}
a(u_1,v)=F(v),
\qquad
\forall v\in H_{\Gamma_{0}}^1(\Omega_1),
\end{equation}
where the bilinear form $\,a(\cdot,\cdot)\,$ is defined by
\begin{equation*}
a(u_1,v)=\sum_{i,j=1}^{n}
\int_{\Omega_1}
a_{ij}(x)
\frac{\partial u_1}{\partial x_i}
\frac{\partial v}{\partial x_j}
\,dx
+
\int_{\Omega_1} (\hat{b}\cdot\nabla u_1)v\,dx
+
\int_{\Omega_1} cu_{1}v\,dx
\end{equation*}
The linear functional $\,F(\cdot)\,$ is defined by
\begin{equation*}
F(v)=\int_{\Omega_1} fv\,dx
\end{equation*}

Let $\,\mathcal{T}_d\,$ be a regular simplicial triangulation of the $\,n\,$-dimensional bounded domain $\,\Omega_1\,$, where $\,d\,$ denotes the maximum value of all element edge lengths in the triangulation $\,\mathcal{T}_d\,$. Since for $\,n>3\,$ the finite element method on an arbitrary high-dimensional domain inevitably encounters numerical difficulties caused by the curse of dimensionality, the dimension is restricted here to $\,n\leq3$. Take the conforming finite element space $\,V_d\subset H^1(\Omega_1)\,$ as the approximation space of $\,H^1(\Omega_1)\,$. For simplicity, only linear\,Lagrange\,finite elements are used here; thus
\begin{equation*}
V_d=
\left\{
v_d\in C(\overline{\Omega}_1):\,v_d|_K\in P_1(K),\ \forall K\in\mathcal{T}_d
\right\}
\end{equation*}
where $\,P_1(K)\,$ denotes the space of linear polynomials on the element $\,K\,$, and $C(\overline{\Omega}_1)\,$ is the space of continuous functions on $\,\overline{\Omega}_1=\Omega_1\cup\partial\Omega_1\,$.

Let the linear conforming finite element space be
\begin{gather*}
  V_d = \operatorname{span}\{\varphi_1, \varphi_2, \ldots, \varphi_N, \ldots, \varphi_{M+N}\}
\end{gather*}
where\,\(\{\varphi_k\}_{k=1}^{M+N}\)\,is a set of global basis functions of\,\(V_d\), and\,\(M+N = \dim V_d\)\,is the total number of all nodes in the triangulation\,\(\mathcal{T}_d\), including the number of boundary nodes. Denote each node by $\,P_{i}\,(i=1,2,\cdots,M+N)\,$, and let the basis function $\,\varphi_{i}\,$ correspond to the node $\,P_{i}$. To facilitate the treatment of boundary conditions, the basis functions are numbered as follows: \(\{\varphi_k\}_{k=N+1}^{M+N}\)\,are the global basis functions whose nodes $\,\{P_{k}\}_{k=N+1}^{M+N}\,$ lie on the\,Dirichlet\,boundary\,\(\Gamma_0\);
\(\{\varphi_k\}_{k=1}^{N}\)\,are the global basis functions whose nodes $\,\{P_{k}\}_{k=1}^{N}\,$ lie in the interior of the domain $\,\Omega\,$ and on the generalized homogeneous\,Neumann\,boundary $\,\Gamma_{k}\,(k\in{\mathcal{I}})$.

By the definition of the linear conforming finite element space $\,V_{d}\,$, it is easy to obtain the linear finite element subspace approximating $\,H_{\Gamma_{0}}^1(\Omega_1)\,$ as
$$
V_d\cap H_{\Gamma_{0}}^1(\Omega_1) = \operatorname{span}\{\varphi_1, \varphi_2, \ldots, \varphi_N\}
$$
Since $\,u_{1}(x)=h(x)$ on $\,\Gamma_{0}\,$, the linear finite element approximation of $\,u_{1}\,$ is
\begin{equation}\label{uhh}
  u_{1}^{d}(x)=\sum_{k=1}^{N}\xi_{k}\varphi_{k}(x)+\sum_{i=N+1}^{M+N}h(P_{i})\varphi_{i}(x)
\end{equation}
where $\,\xi_{k}\,(k=1,2,\cdots,N)\,$ are unknowns, and $h(P_{i})\,(i=N+1,N+2,\cdots,M+N)\,$ are the known function values of $\,h\,$ at the nodes $\,P_{i}$.

The linear finite element discretization scheme for the variational problem\,\eqref{vara}\,is
\begin{equation}\label{uhdis}
  a\Big(\sum_{k=1}^{N}\xi_{k}\varphi_{k}(x)+\sum_{i=N+1}^{M+N}h(P_{i})\varphi_{i}(x),\varphi_{j}(x)\Big)=F(\varphi_{j}(x)),\quad j=1,\,2,\,\cdots,\,N
\end{equation}
By the bilinearity of $\,a(\cdot,\cdot)\,$, \,\eqref{uhdis}\,can be rewritten as
\begin{equation}\label{uhdisd}
  \sum_{k=1}^{N}a(\varphi_{k}(x),\varphi_{j}(x))\xi_{k}
  =F(\varphi_{j}(x))-\sum_{i=N+1}^{M+N}a(\varphi_{i}(x),\varphi_{j}(x))h(P_{i}),\quad j=1,\,2,\,\cdots,\,N
\end{equation}

Equation\,(\ref{uhdisd})\,can be transformed into the linear system
\begin{equation}\label{linearequ1}
K\xi=F,
\end{equation}
where $\,\xi=(\xi_1,\xi_2,\ldots,\xi_N)^T$, the stiffness matrix is $\,K=(K_{jk})_{N\times N}\,$ with elements $\,K_{jk}=a(\varphi_k(x),\varphi_j(x))$, and the right-hand-side vector is $\,F=(F_j)$ with components $F_j=F(\varphi_{j}(x))-\sum_{i=N+1}^{M+N}a(\varphi_{i}(x),\varphi_{j}(x))h(P_{i})$.

The coefficient matrix in the linear system\,\eqref{linearequ1}\,is large-scale and highly sparse, so fast iterative methods must be used to achieve efficient numerical solution. By numerically solving the linear system\,\eqref{linearequ1}\,one obtains the vector $\,\xi$; substituting it into\,\eqref{uhh} gives the linear finite element approximate solution of the second-order linear elliptic boundary value problem\,\eqref{domain1}.

\section{Dimension-Reducible Mathematical Models and Numerical Examples}
This section mainly verifies the application effect of the solution-symmetry theory in domain-reduction finite element computation. First, a high-dimensional elliptic boundary value problem with a spherically symmetric structure is constructed, and it is shown that it can be reduced from a high-dimensional partial differential equation to a one-dimensional radial ordinary differential equation. Then, a two-dimensional electrostatic field problem, a two-dimensional magnetostatic field problem, and a second-order linear elliptic boundary value problem in which all coefficient functions are nonzero are selected as representative examples. The common symmetry group of each problem and the internal boundary conditions induced by it during domain reduction are first discussed. Subsequently, by comparing the finite element numerical results on the full domain and the reduced domain, it is verified that this reduction method can significantly reduce computational cost while fully preserving the consistency of the structure of the numerical solution.
\subsection{Dimension-Reducible Mathematical Model}
This subsection considers a second-order linear elliptic boundary value problem with a spherically symmetric structure, for which dimensional reduction can be achieved through symmetry analysis. Let
\[
\Omega = \left\{ x \in \mathbb{R}^n : R_1 < \|x\| < R_2 \right\},
\]
where\,\(R_1, R_2 > 0\)\,are given constants and \(n \geq 2\). Consider the following second-order linear elliptic boundary value problem:
\begin{subequations}\label{eq:radial_problem}
\begin{align}
- \nabla \cdot \big( \epsilon(\|x\|) I_n \nabla \varphi \big) &= f(\|x\|), && x \in \Omega, \label{eq:radial_problem_a} \\
\varphi &= C_1, && \|x\| = R_1, \label{eq:radial_problem_b} \\
\varphi &= C_2, && \|x\| = R_2. \label{eq:radial_problem_c}
\end{align}
\end{subequations}
where the coefficient function\,\(\epsilon(\|x\|)\)\,and the source term\,\(f(\|x\|)\)\,both depend only on the radial variable\,\(r = \|x\|\),
and there exists a constant\,\(\alpha > 0\)\,such that\,\(\epsilon(\|x\|)\geq\alpha\)\,holds for any\,\(x \in \Omega\).
$C_1\,$ and $\,C_2\,$ are two given constants, and the unknown function\,\(\varphi\)\,is an \(n\)-variable real-valued function defined on\,\(\Omega\).

By Theorem\,\ref{theorem1}, the boundary value problem\,\eqref{eq:radial_problem}\,is well-posed. Note that $\m{f}$ depends only on $\,r$; hence $\m{f}$ remains invariant under the action of the orthogonal group $\m{O_n}$, namely $\,\operatorname{Sym}(f) = O_n$. Note also that $\,\epsilon(\|x\|)I_n\,$ depends only on\,\(r\). Therefore, $\,Q^{T}\epsilon(\|Qx\|) I_n Q=\epsilon(\|Qx\|) I_n=\epsilon(\|x\|) I_n\,$ holds identically for any $\,Q\in{O_{n}}\,$, and hence $\,\operatorname{Sym}(\epsilon(\|x\|) I_n)=O_{n}$. In addition, the boundary conditions\,\eqref{eq:radial_problem_b}--\eqref{eq:radial_problem_c}\,are likewise invariant with respect to\,\(O_n\). Thus, by Theorem\,\ref{thm:symmetry_solution}, the orthogonal group\,\(O_n\)\,is a subgroup of the symmetry group\,\(\operatorname{Sym}(\varphi)\)\,of the solution\,\(\varphi\). Moreover, \(\operatorname{Sym}(\varphi)\)\,is clearly a subgroup of\,\(O_n\). Therefore, \(\operatorname{Sym}(\varphi) = O_n\). This means that\,\(\varphi\)\,is invariant under the action of the orthogonal transformation group $\,O_{n}\,$, and hence\,\(\varphi\)\,depends only on the radial variable\,\(r = \|x\|\), and is independent of the angular variables. That is, there exists a univariate function\,\(\phi\)\,such that
\begin{equation}\label{varphi}
\varphi(x) = \phi(r)
\end{equation}

Starting from equation\,\eqref{eq:radial_problem_a}, one can derive the ordinary differential equation satisfied by $\,\phi\,$. According to vector identities and the specific expression of the\,Laplace\,operator in an $\,n\,$-dimensional spherical coordinate system, the following identity is obtained:
\begin{equation}\label{identify}
  \nabla \cdot \big( \epsilon(r) \nabla \varphi \big) =\epsilon(r)\Delta \varphi+\nabla\epsilon(r)\cdot \nabla \varphi=
  \epsilon(r)\phi''(r) + \frac{n-1}{r} \epsilon(r)\phi'(r)+\epsilon'(r)\phi'(r).
\end{equation}
Substituting formula\,\eqref{identify}\,into equation\,\eqref{eq:radial_problem_a} gives the ordinary differential equation boundary value problem satisfied by $\,\phi\,$:
\begin{subequations}\label{phiode}
\begin{align}
-\epsilon(r)\phi''(r) - \left(\frac{n-1}{r}\epsilon(r)+\epsilon'(r) \right)\phi'(r) &= f(r),
&& R_{1}<r<R_{2}, \label{phiodea} \\
\phi(R_1) &= C_1, &&\label{phiodeb} \\
\phi(R_2) &= C_2, &&\label{phiodec}
\end{align}
\end{subequations}

By solving the two-point boundary value problem\,\eqref{phiode}\,for the ordinary differential equation, one obtains the solution of the radial function $\,\phi(r)\,$. Substituting it into the transformation relation\,\eqref{varphi}\,then determines the expression of the original unknown function $\,\varphi({x})\,$. When the mathematical expressions of $\,\epsilon(r)\,$ or $\,f(r)\,$ are complicated, analytic solution of equation\,\eqref{phiode}\,is usually very difficult; in this case, numerical methods must be used. The one-dimensional finite element method or one-dimensional spectral element method can both effectively handle this type of problem. Owing to space limitations, this paper does not discuss the implementation details of these numerical algorithms.

In particular, when\,\(\epsilon(r) \equiv 1\), the boundary value problem\,\eqref{phiode}\,can be reduced to the following two-point boundary value problem on the one-dimensional interval\,\((R_1, R_2)\):
\begin{subequations}\label{eq:radial_ode}
\begin{align}
\phi''(r) + \frac{n-1}{r} \phi'(r) &= -f(r), \quad R_1 < r < R_2, \label{eq:radial_ode_a} \\
\phi(R_1) &= C_1, \label{eq:radial_ode_b} \\
\phi(R_2) &= C_2. \label{eq:radial_ode_c}
\end{align}
\end{subequations}

Note the identity
\[
\phi''(r) + \frac{n-1}{r} \phi'(r) = \frac{1}{r^{n-1}} \left( r^{n-1} \phi'(r) \right)',
\]
Therefore, equation\,\eqref{eq:radial_ode_a}\,is equivalent to
\begin{equation}\label{eq:integral_form}
\left( r^{n-1} \phi'(r) \right)' = -r^{n-1} f(r).
\end{equation}

Define
\begin{equation}
F(r) = -\int_{R_1}^{r} \xi^{n-1} f(\xi) \, d\xi.
\end{equation}
Integrating equation\,\eqref{eq:integral_form}\,once gives
\begin{equation}
r^{n-1} \phi'(r) = A + F(r),
\end{equation}
where\,\(A\)\,is an undetermined constant. Therefore,
\begin{equation}
\phi'(r) = r^{1-n} \big( A + F(r) \big).
\end{equation}

Integrating again from\,\(R_1\)\,to\,\(r\)\,and using the boundary condition\,\(\phi(R_1) = C_1\), one obtains
\begin{equation}
\phi(r) = C_1 + A \int_{R_1}^{r} t^{1-n} \, dt - \int_{R_1}^{r} t^{1-n} \left( \int_{R_1}^{t} \xi^{n-1} f(\xi) \, d\xi \right) dt.
\end{equation}

Let
\begin{equation}
I_n(r) = \int_{R_1}^{r} t^{1-n} \, dt,
\end{equation}
and
\begin{equation}\label{eq:J_n_def}
J_n(r) = \int_{R_1}^{r} t^{1-n} \left( \int_{R_1}^{t} \xi^{n-1} f(\xi) \, d\xi \right) dt.
\end{equation}
Then
\begin{equation}
\phi(r) = C_1 + A I_n(r) - J_n(r).
\end{equation}

Using the other boundary condition\,\(\phi(R_2) = C_2\), one obtains
\[
C_2 = C_1 + A I_n(R_2) - J_n(R_2),
\]
and hence
\begin{equation}
A = \frac{C_2 - C_1 +J_n(R_2)}{I_n(R_2)}.
\end{equation}

Therefore, the solution of the boundary value problem\,\eqref{eq:radial_ode}\,can be expressed as
\begin{equation}\label{eq:general_solution}
\phi(r) = C_1 + \frac{C_2 - C_1 +J_n(R_2)}{I_n(R_2)} I_n(r) -J_n(r).
\end{equation}

The explicit expressions for the cases\,\(n \neq 2\)\,and\,\(n = 2\)\,are given respectively below.

\textbf{Case 1: \,\(n \neq 2\).}
In this case,
\[
I_n(r) = \frac{r^{2-n} - R_1^{2-n}}{2 - n},
\]
Substituting into\,\eqref{eq:general_solution}\,gives
\begin{equation}
\phi(r) = C_1 + \frac{C_2 - C_1 + J_n(R_2)}{R_2^{2-n} - R_1^{2-n}} \left( r^{2-n} - R_1^{2-n} \right) - J_n(r),
\end{equation}
where\,\(J_n(r)\)\,is given by\,\eqref{eq:J_n_def}.

\textbf{Case 2: \,\(n = 2\).}
In this case,
\[
I_2(r) = \ln \frac{r}{R_1},
\]
Substituting into\,\eqref{eq:general_solution}\,gives
\begin{equation}
\phi(r) = C_1 + \frac{C_2 - C_1 + J_2(R_2)}{\ln(R_2 / R_1)} \ln \frac{r}{R_1} - J_2(r),
\end{equation}
where\,\(J_2(r)\)\,is defined by\,\eqref{eq:J_n_def}.

\subsection{Two-Dimensional Electrostatic Field Problem}
This subsection considers a two-dimensional multi-material electrostatic field model to verify the applicability of elliptic boundary value problems with reflection-symmetric structures in domain-reduction finite element computation. This numerical model refers to reference\cite{zhang2025}.
The computational domain is the rectangular domain
\begin{equation*}
\Omega=\{(x,y):\,-2<x<2,~-1<y<1\}
\end{equation*}
where the units of $\,x,y\,$ are both\,mm. The material distribution in this electrostatic field model is shown in Fig.\,\ref{elecwq}.
\begin{figure}
    \centering
    \includegraphics[
        height=3.3cm,
        keepaspectratio,
        trim=0 0 0 0,
        clip
    ]{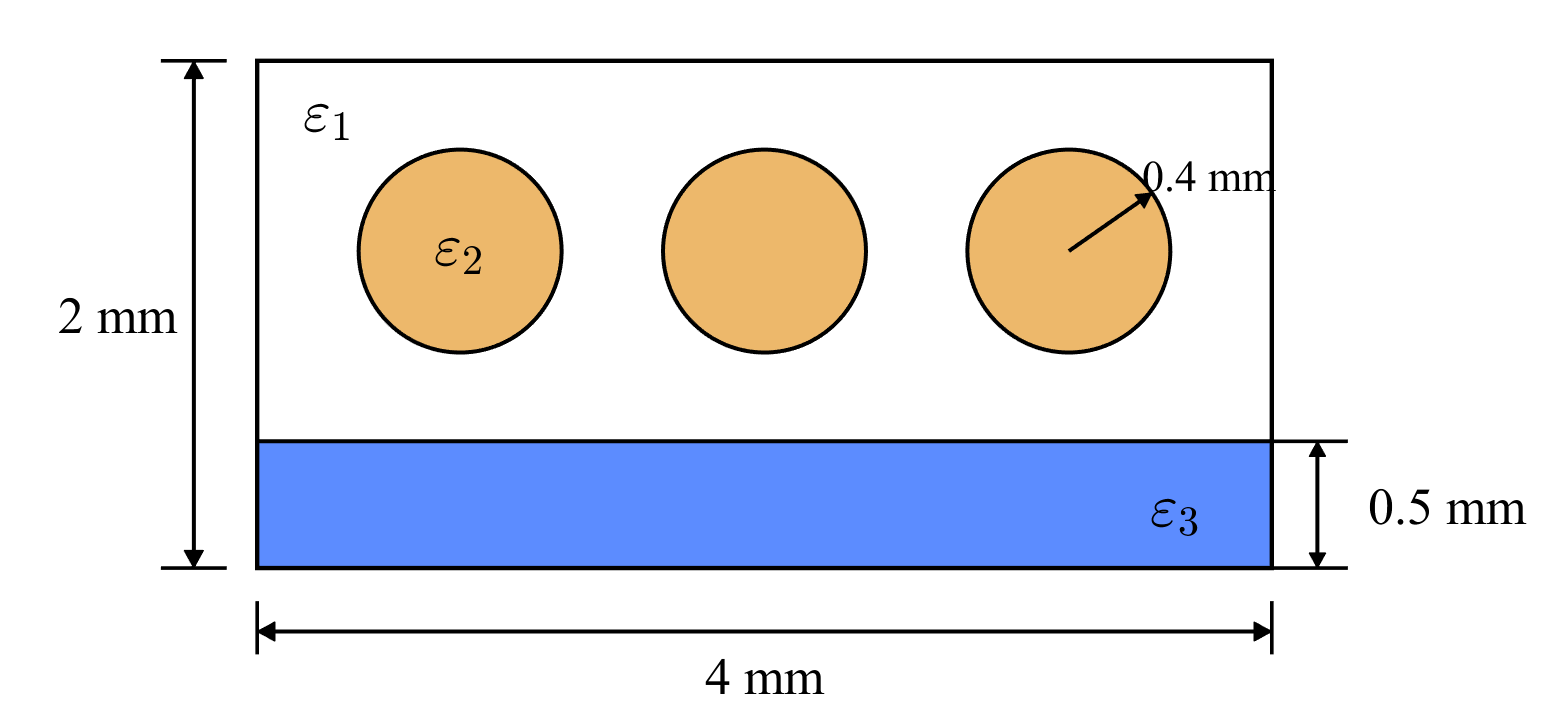}
    \caption{Material distribution of the two-dimensional electrostatic field model}
    \label{elecwq}
\end{figure}
The upper part is a background material with thickness\,1.5\,mm, whose relative permittivity is denoted by $\varepsilon_1$. Three circular dielectric particles with radius\,0.4\,mm are embedded in the background material, and their center coordinates are respectively
\begin{equation*}
C_1=(-1.2,\,0.25)^T,
\qquad
C_2=(0,\,0.25)^T,
\qquad
C_3=(1.2,\,0.25)^T
\end{equation*}
The relative permittivity of the particles is denoted by\,$\varepsilon_2$, and the lower part is a substrate material with thickness\,0.5\,mm, whose relative permittivity is denoted by\,$\varepsilon_3$. Clearly, the material filling the domain is piecewise homogeneous, so the permittivity\,\(\varepsilon(x,y)\)\,is a function of the position coordinates. Under the condition of no free charge, the governing equation for the electric potential in the electrostatic field is
\begin{equation}
-\nabla\cdot\bigl(\varepsilon(x,y)\nabla\phi\bigr)=0,
\qquad (x,y)\in\Omega
\label{eq:dbd_model_eq}
\end{equation}
Here \(\phi(x,y)\) is the electric potential, and $\varepsilon_1=1,\,\varepsilon_2=4, \,\varepsilon_3=0.1$. To form a stable potential difference, a potential of\,\(100\,\mathrm{V}\)\,is applied on the upper boundary of the rectangular domain, and the remaining outer boundaries are grounded. Thus, the mathematical expression of the boundary conditions is
\begin{equation}
\phi\big|_{y=1}=100\,\mathrm{V},
\qquad
\phi\big|_{x=-2}
=
\phi\big|_{x=2}
=
\phi\big|_{y=-1}
=
0\,\mathrm{V}
\label{eq:dbd_model_bc}
\end{equation}

The second-order elliptic boundary value problem\,\eqref{eq:dbd_model_eq}-\eqref{eq:dbd_model_bc} constitutes the physical model of the two-dimensional electrostatic field. Solving this physical model yields the electric potential $\phi$, and the electrostatic field distribution can then be obtained by using $\,\mathbf{E}=-\nabla\phi\,$.
\subsubsection{Symmetry Analysis of the Electrostatic Field Problem}
The equation satisfied by the electric potential,\,\eqref{eq:dbd_model_eq}, is a special case of the second-order linear elliptic boundary value problem\,\eqref{domaina}. The second-order coefficient matrix function can be written as\,$A(X)=\varepsilon(X)I_2,~X=(x,y)^T$, where\,\(I_2\)\,is the second-order identity matrix. The first-order and zeroth-order terms are both zero, namely\,$\hat{b}(x,y)={\bf{0}},\,c(x,y)=0$; the interior source term is $\,f(x,y)=0$, and the boundary function is given by\,\eqref{eq:dbd_model_bc}. The symmetry group\,$\text{Sym}(\Omega)$\,of the rectangular domain is a fourth-order group, namely
$$\text{Sym}(\Omega)=\{I_{2},~Q_{v},~Q_{h},~Q_{v}Q_{h}\}$$
where
\begin{equation}\label{recsym}
I_2=
\begin{pmatrix}
1&0\\
0&1
\end{pmatrix},\quad
Q_v=
\begin{pmatrix}
-1&0\\
0&1
\end{pmatrix},\quad
Q_h=
\begin{pmatrix}
1&0\\
0&-1
\end{pmatrix},\quad
Q_vQ_h=
\begin{pmatrix}
-1&0\\
0&-1
\end{pmatrix}
\end{equation}
Clearly, the transformations corresponding to the matrices $I_{2},~Q_{v},~Q_{h},~Q_{v}Q_{h}\,$ are respectively the identity transformation, the even-symmetry transformation with respect to the $\m{y}$-axis, the even-symmetry transformation with respect to the $\m{x}$-axis, and rotation by $\m{180^{\circ}}$.

Since the material distribution is symmetric with respect to the $\m{y}$-axis but not symmetric with respect to the $\m{x}$-axis, the symmetry group of the relative permittivity is $\,\text{Sym}(\varepsilon)=\text{Sym}(A)=\{I_{2},~Q_v\}$. Since \,\(\hat{b}(x,y)={\bf{0}}\), \(c(x,y)=0\), and \(f(x,y)=0\), the symmetry groups of the first-order term, zeroth-order term, and interior source are all $\,\text{Sym}(\Omega)$. For the boundary conditions, the reflection transformation $\,Q_v\,$ maps the left and right boundaries $\,x=-2\,$ and $\,x=2\,$ to each other, while the upper boundary\,\(y=1\)\,and the lower boundary\,\(y=-1\)\,correspond to themselves, and the electric potential on the boundary remains invariant under this transformation; the other non-identity transformations do not have this property. Therefore, the symmetry group of the boundary function $\,h(x,y)\,$ is $\{I_{2},\,Q_{v}\}$. It follows that the common symmetry group of the coefficient functions, the interior source, and the boundary source in this model is
\[
G=
\operatorname{Sym}(A)\cap
\operatorname{Sym}(\hat{b})\cap
\operatorname{Sym}(c)\cap
\operatorname{Sym}(f)\cap
\operatorname{Sym}(h)
=
\{I_2,\,Q_v\}
\]
According to Theorem\,\ref{thm:symmetry_solution}, the electric-potential function $\,\phi\,$ must inherit the above common symmetry, namely $\,\phi(Q_vX)=\phi(X)$. Written in coordinate form, this is $\,\phi(-x,y)=\phi(x,y),\,\forall(x,y)\in{\Omega}$, which shows that the electric-potential function $\,\phi\,$ is even-symmetric with respect to the $\m{y}$-axis.

Therefore, the electric-potential distribution problem\,\eqref{eq:dbd_model_eq}-\eqref{eq:dbd_model_bc}\,on the whole domain $\m{\Omega}$ can be transformed into the corresponding boundary value problem on the left half-domain
$\Omega_{1/2}=(-2,\,0)\times(-1,\,1)$.
In this case, the boundary condition at $\m{x=0}$ for the elliptic boundary value problem on the half-domain $\,\Omega_{1/2}\,$ is the homogeneous\,Neumann\,boundary condition $\,\frac{\partial \phi}{\partial n}=0$, while the boundary conditions on the other boundaries of the subdomain problem inherit the\,Dirichlet\,boundary conditions on the whole domain $\m{\Omega}$.

\subsubsection{Finite Element Numerical Simulation of the Electrostatic Field Problem}

In this example, linear triangular finite elements are used to perform numerical simulations on the full-domain and half-domain models, respectively. The full-domain computation result is used to show the overall electric-potential distribution in the multi-material structure, and the half-domain computation result is used to verify whether the symmetry-based domain-reduction method can reproduce the solution structure of the corresponding part of the full domain. The numerical computation results are shown in Fig.\,\ref{fig:electrostatic_results}.
\begin{figure}
    \centering
    \begin{minipage}[t]{0.50\textwidth}
        \centering
        \includegraphics[
            height=3.2cm,
            keepaspectratio,
            trim=0 0 0 0,
            clip
        ]{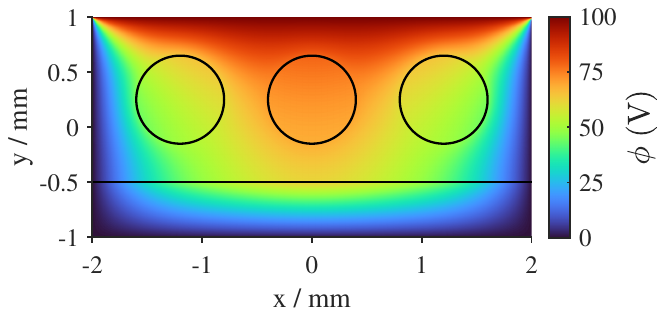}

        \vspace{0.2em}
        (a) Electric-potential distribution on the full domain
    \end{minipage}
    \hspace{0.04\textwidth}
    \begin{minipage}[t]{0.34\textwidth}
        \centering
        \includegraphics[
            height=3.2cm,
            keepaspectratio,
            trim=0 0 0 0,
            clip
        ]{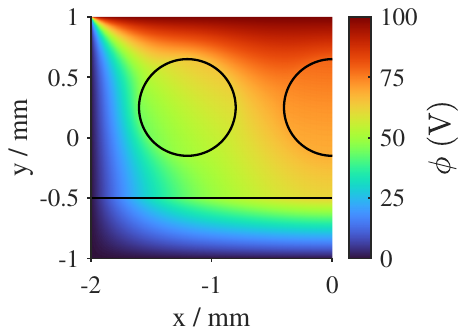}

        \vspace{0.2em}
        (b) Electric-potential distribution on the left half-domain
    \end{minipage}

    \vspace{0.3em}
    \caption{Electric-potential distributions on the full domain and the half-domain}
    \label{fig:electrostatic_results}
\end{figure}

Fig.\,\ref{fig:electrostatic_results}\,shows that the electric-potential distribution in the full domain $\m{\Omega}$ exhibits a clear mirror-symmetry feature with respect to the $\m{y}$-axis, and that the half-domain computation result can reproduce well the electric-potential distribution of the corresponding part of the full domain. Therefore, when the material distribution and boundary conditions have common reflection symmetry, symmetry can be used to reduce the full domain to a half-domain for finite element solution. This method reduces the computational domain and the number of degrees of freedom while preserving the consistency of the electric-potential distribution structure, thereby improving numerical computational efficiency.

\subsection{Two-Dimensional Magnetostatic Field Problem}
This subsection considers the two-dimensional magnetostatic field distribution generated by two parallel current-carrying conductors in a homogeneous medium. It is assumed that the conductors extend infinitely in the $\,z\,$ direction and that the field quantities do not vary in the $\,z\,$ direction. Therefore, a two-dimensional computational model can be established in the\,\(xOy\)\,plane. The computational domain is taken as the rectangular domain
\begin{equation*}
\Omega=\{(x,y):\,-20<x<20,~-10<y<10\}
\end{equation*}
where the units of $\,x,y\,$ are both\,mm.

Assume that the medium filling the domain is air, with relative permeability $\,\mu_r=1$. Two circular conductors are placed in the rectangular domain, and the radii of their cross sections are both $\,1\,\mathrm{mm}$.
The centers of the two conductors lie on the same horizontal line, and their coordinates are respectively
\begin{equation*}
X_1=(-10,\,0)^{T},
\qquad
X_2=(10,\,0)^{T}.
\end{equation*}
The model structure is shown in Fig.\,\ref{fig:mag_structure_model}.

\begin{figure}
    \centering
    \includegraphics[
        height=3.3cm,
        keepaspectratio,
        trim=0 0 0 0,
        clip
    ]{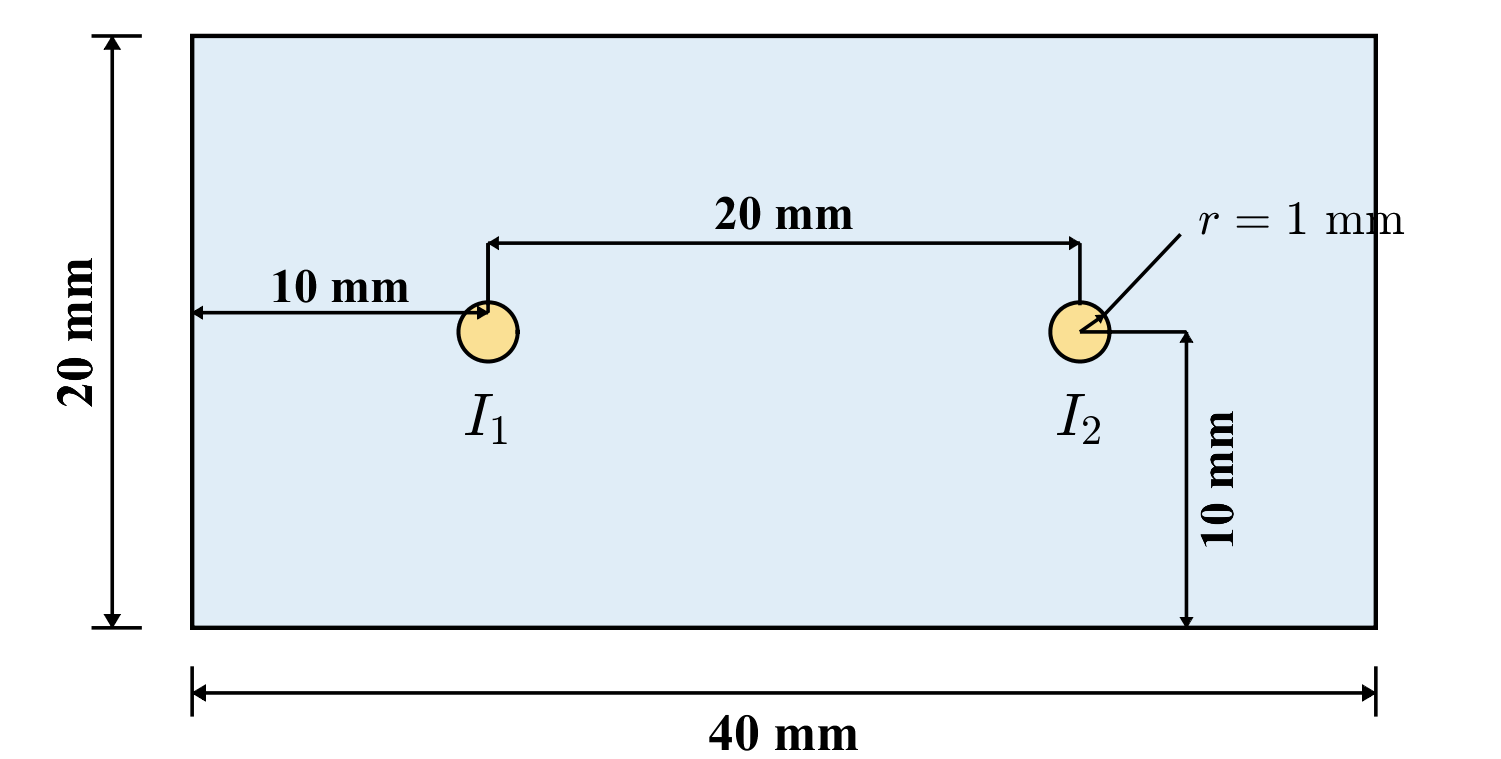}
    \caption{Structural schematic of the two-dimensional magnetostatic field model}
    \label{fig:mag_structure_model}
\end{figure}

The currents in the left and right conductors are denoted by\,\(I_1\) and\,\(I_2\), respectively. In the computation, the two conductor currents are set to have equal magnitudes and the same direction, namely
\begin{equation*}
I_1=100\,\mathrm{A},
\qquad
I_2=100\,\mathrm{A}.
\end{equation*}
Here, \(I_1\)\,and\,\(I_2\)\,both denote currents in the\,\(+z\)\,direction. In the two-dimensional magnetostatic field problem, the magnetic vector potential has only a component in the\,\(z\)\,direction, denoted by $\,\mathbf{A}=(0,0,A_z)$. The magnetic flux density is given by the curl of the magnetic vector potential, namely $\,\mathbf{B}=\nabla\times\mathbf{A}$. Therefore, in the two-dimensional case,
\begin{equation}
B_x=\frac{\partial A_z}{\partial y},
\qquad
B_y=-\frac{\partial A_z}{\partial x}.
\label{eq:wire_B_A}
\end{equation}

Let $\,\nu=\mu^{-1},\,\mu=\mu_0\mu_r$, where $\mu_{0}$ is the magnetic permeability constant in vacuum. Since the medium in this model is homogeneous,\,\(\nu\)\,is a constant function on $\,\Omega\,$. The magnetic vector potential\,\(A_z\)\,satisfies the following elliptic boundary value problem:
\begin{equation}
-\nabla\cdot(\nu\nabla A_z)=J_z,
\qquad (x,y)\in\Omega
\label{eq:wire_model_eq}
\end{equation}
The current inside each conductor is taken to be uniformly distributed, so the magnitude of the current density is $\,J_0=100\pi^{-1}\,\text{A}\cdot\mathrm{mm}^{-2}$. Denote the cross-sectional regions of the left and right conductors by\,\(D_1\)\,and\,\(D_2\), respectively. Then the source term can be written as
\begin{equation*}
J_z(x,y)=
\begin{cases}
 J_0, & (x,y)\in D_1\cup D_2\\
0, & (x,y)\in \Omega\setminus(D_1\cup D_2)
\end{cases}
\end{equation*}
The zero magnetic-vector-potential boundary condition is adopted on the outer boundary:
\begin{equation}
A_z(x,y)=0,
\qquad (x,y)\in\partial\Omega
\label{eq:wire_model_bc}
\end{equation}

The second-order elliptic boundary value problem\,\eqref{eq:wire_model_eq}-\eqref{eq:wire_model_bc}\,constitutes the physical model of the two-dimensional magnetostatic field. Solving this model yields $\,A_{z}$, and substituting it into\,\eqref{eq:wire_B_A}\,gives the magnetic-flux-density distribution.
\subsubsection{Symmetry Analysis of the Magnetostatic Field Problem}
The equation\,\eqref{eq:wire_model_eq}-\eqref{eq:wire_model_bc}\,satisfied by the component $\m{A_{z}}$ of the magnetic vector potential is a special case of the second-order linear elliptic\,Dirichlet\,boundary value problem\,\eqref{domain}. Here, the second-order coefficient matrix function is $\,A(x,y)=\nu I_2$, the first-order column-vector function is $\,\hat{b}(x,y)={\bf{0}}\,$, the zeroth-order term is $\,c(x,y)=0$, the interior source term $\,f(x,y)\,$ is the current-density function\,\(J_z(x,y)\), and the boundary source function is $\,h(x,y)=0$.

The symmetry group of the rectangular domain $\,\Omega\,$ is $\,\operatorname{Sym}(\Omega)=\{I_{2},\,Q_h,\,Q_v,\,Q_vQ_{h}\}$, where the matrices $I_{2},\,Q_h,\,Q_v,\,Q_vQ_{h}$ are defined by\,\eqref{recsym}. Since the medium is homogeneous,\,\(\nu\)\,is a constant function, and therefore the symmetry group of the second-order coefficient matrix function\,\(A(x,y)=\nu I_2\)\,is
$\text{Sym}(A)=\text{Sym}(\nu)=\text{Sym}(\Omega)$. Since\,\(\hat{b}={\bf{0}}\) and \(c=0\),
$
\operatorname{Sym}(\hat{b})=
\operatorname{Sym}(c)=
\operatorname{Sym}(\Omega)
$.

In this example, the two conductor currents have equal magnitudes, the same direction, and a symmetric distribution; therefore the symmetry group of the interior source term $J_{z}(x,y)$ is $\,\operatorname{Sym}(J_z)=\operatorname{Sym}(\Omega)$. In addition, the outer boundary condition is a homogeneous\,Dirichlet\,condition, and therefore the symmetry group of the boundary source function is $\,\operatorname{Sym}(h)=\operatorname{Sym}(\Omega)$.

In summary, the common symmetry group of this physical model is
\begin{equation*}
G=
\operatorname{Sym}(A)
\cap \operatorname{Sym}(\hat{b})
\cap \operatorname{Sym}(c)
\cap \operatorname{Sym}(J_z)
\cap \operatorname{Sym}(h)
=\operatorname{Sym}(\Omega)=
\{I,Q_h,Q_v,Q_hQ_v\}
\end{equation*}
According to Theorem\,\ref{thm:symmetry_solution}, the component\,\(A_z\)\,of the magnetic vector potential inherits the above common symmetry, namely $\,G\subset\text{Sym}(A_{z})$, and hence
$$A_{z}(x,y)=A_{z}(-x,y)=A_{z}(x,-y)=A_{z}(-x,-y),~\forall(x,y)\in{\Omega}$$

According to the domain-reduction finite element theory, the second-order elliptic boundary value problem\,\eqref{eq:wire_model_eq}-\eqref{eq:wire_model_bc}\,on the whole rectangular domain $\m{\Omega}$ can be reduced, by using the symmetry of the solution, to the corresponding boundary value problem on a\,1/4\,domain. The lower-left\,1/4\,domain $\,\Omega_{1/4}=(-20,0)\times(-10,0)\,$ is selected as the computational subdomain. The boundary of this subdomain consists of two types of boundary conditions: the homogeneous\,Dirichlet\,condition inherited on the original boundary, and the homogeneous\,Neumann\,condition imposed on the newly added symmetry boundaries $\,x=0\,$ and $\,y=0\,$.

\subsubsection{Finite Element Numerical Simulation of the Magnetostatic Field Problem}

Linear triangular finite elements are used to numerically solve the full-domain and\,1/4\,domain models, respectively. The full-domain computation result is used to show the overall distribution of the magnetic-vector-potential component in the rectangular domain, and the\,1/4\,domain computation result is used to verify whether the symmetry-based domain-reduction method can reproduce the solution structure of the corresponding part of the full domain. The numerical computation results are shown in Fig.\,\ref{fig:magnetostatic_results}.

\begin{figure}
    \begin{center}
    \begin{minipage}[t]{0.4\textwidth}
        \centering
        \includegraphics[
            height=3.2cm,
            keepaspectratio,
            trim=0 0 0 0,
            clip
        ]{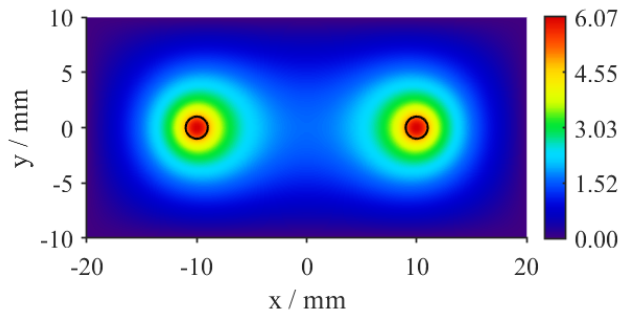}

        \vspace{0.4em}
        (a) Numerical result on the full domain
    \end{minipage}
    \hspace{0.04\textwidth}
    \begin{minipage}[t]{0.32\textwidth}
        \centering
        \includegraphics[
            height=3.2cm,
            keepaspectratio,
            trim=0 0 0 0,
            clip
        ]{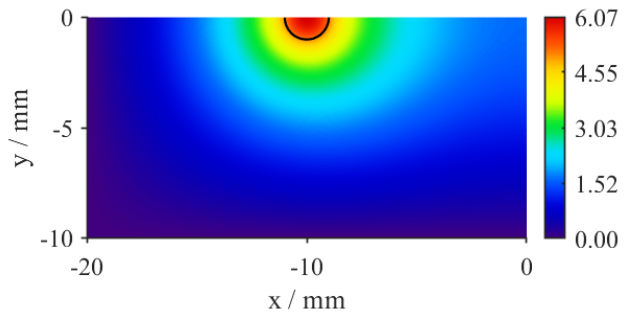}
        \vspace{0.2em}
        (b) Numerical result on the lower-left subdomain
    \end{minipage}
\end{center}
    \vspace{0.3em}
    \caption{Distribution of the magnetic-vector-potential component on the full domain and the half-domain}
    \label{fig:magnetostatic_results}
\end{figure}

As shown in Fig.\,\ref{fig:magnetostatic_results}, the distribution of the magnetic-vector-potential component in the full domain $\m{\Omega}$ has mirror symmetry with respect to both the $\m{x}$-axis and the $\m{y}$-axis; this symmetry implies central-inversion invariance, and the computation result on the lower-left\,1/4\,domain is consistent with the magnetic-vector-potential component distribution at the corresponding location in the full domain. Based on the mirror symmetry of the model, the full domain can be further reduced to a\,1/4\,domain for finite element solution, thereby significantly reducing the computational effort.

\subsection{Second-Order Linear Elliptic Boundary Value Problem with All Coefficient Functions Nonzero}
A second-order linear elliptic boundary value problem on a circular domain is selected as the most general numerical example to verify the effectiveness of the solution-symmetry theory and the domain-reduction finite element method. The computational domain is a disk centered at the origin with radius\,\(R=2\,\mathrm{m}\):
\begin{equation*}
\Omega=\left\{(x,y)\in\mathbb{R}^{2}:x^{2}+y^{2}<R^{2}\right\}
\end{equation*}
where the units of $\,x,y\,$ are both\,m.
The second-order linear elliptic\,Dirichlet\,boundary value problem\,\eqref{domain}\,is the computational model in this section. The second-order coefficient matrix function, first-order coefficient column-vector function, and zeroth-order coefficient function are respectively taken as
\begin{equation*}
A(x,y)=
\begin{pmatrix}
2 & 1\\
1 & 2
\end{pmatrix},
\qquad
\hat{b}(x,y)=
\begin{pmatrix}
2x-y\\
2y-x
\end{pmatrix},
\qquad
c(x,y)=x+y
\end{equation*}
\begin{figure}
    \centering
    \includegraphics[
        height=4.2cm,
        keepaspectratio,
        trim=0 0 0 0,
        clip
    ]{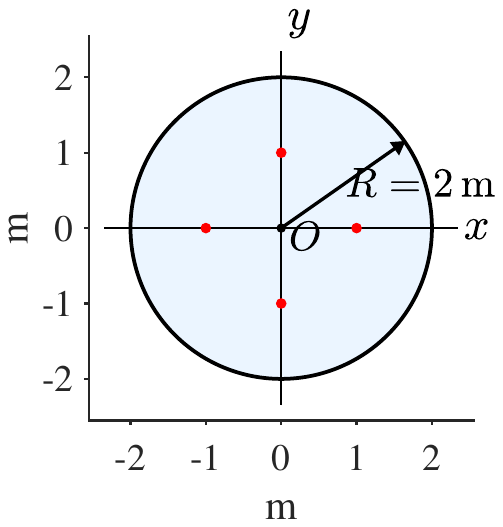}
    \caption{Geometric model of the second-order linear elliptic boundary value problem}
    \label{fig:elc_structure_mode2}
\end{figure}
The interior source function and boundary source function are taken respectively as
\begin{equation*}
  f(x,y)=\sum_{k=1}^{4}q_{k}\delta(x-x_{k},y-y_{k}),\quad h(x,y)=(x+y)^2
\end{equation*}
In the above expression,\,\(\delta\)\,is the Dirac\,delta\,function, and the positions and source strengths of the four point sources are respectively
\begin{equation*}
(x_{1},y_{1})=(1,0)^{T},\quad
(x_{2},y_{2})=(0,1)^{T},\quad
(x_{3},y_{3})=(-1,0)^{T},\quad
(x_{4},y_{4})=(0,-1)^{T},
\end{equation*}
\begin{equation*}
q_{1}=10,\quad q_{2}=10,\quad q_{3}=1,\quad q_{4}=1
\end{equation*}
The computational model is shown in Fig.\,\ref{fig:elc_structure_mode2}.

\subsubsection{Symmetry Analysis of the Model Problem}

To simplify the symmetry analysis, let
\begin{equation*}
X=
\begin{pmatrix}
x\\
y
\end{pmatrix},
\qquad
\hat{b}(X)=BX,
\qquad
B=
\begin{pmatrix}
2 & -1\\
-1 & 2
\end{pmatrix}
\end{equation*}
At the same time, the zeroth-order coefficient function and boundary function can be written as
\begin{equation*}
c(X)=l^{T}X,
\qquad
h(X)=(l^{T}X)^{2},
\qquad
l=
\begin{pmatrix}
1\\
1
\end{pmatrix}
\end{equation*}

Clearly, the symmetry group $\,\operatorname{Sym}(\Omega)\,$ of the circular domain is the second-order orthogonal group $\,O(2)$. For the second-order coefficient matrix\,\(A\), its symmetry condition is that\,\(Q^{T}AQ=A\)\,holds for $\,\forall\,Q\in{O_2}\,$. The condition $Q^{T}AQ=A$\,is equivalent to $AQ=QA$, and hence one obtains
\begin{equation*}
\operatorname{Sym}(A)=\{I_2,\,P,\,-I_2,\,-P\}
\end{equation*}
where
\begin{equation*}
I_2=
\begin{pmatrix}
1 & 0\\
0 & 1
\end{pmatrix},
\quad
P=
\begin{pmatrix}
0 & 1\\
1 & 0
\end{pmatrix}
\end{equation*}
Here,\,\(I_{2}\)\,denotes the identity transformation, \(P\)\,denotes the reflection transformation with respect to the line\,\(y=x\), \(-P\)\,denotes the reflection transformation with respect to the line\,\(y=-x\), and \(-I_{2}\)\,denotes the central-inversion transformation.

For the first-order coefficient column-vector function\,\(\hat{b}(X)=BX\), its symmetry condition is $\,Q^{T}\hat{b}(QX)=\hat{b}(X),\,\forall\,Q\in{O_2}$, which means $\,\hat{b}(QX)=Q\hat{b}(X),\,\forall\,Q\in{O_2}$. Thus $\,BQ=QB,\,\forall\,Q\in{O_2}$, and consequently
\begin{equation*}
\operatorname{Sym}(\hat{b})=\{I_2,\,P,\,-I_2,\,-P\}
\end{equation*}

For the zeroth-order coefficient function\,\(c(X)=l^{T}X\), its symmetry condition is\,\(c(QX)=c(X)\),
which is equivalent to\,\(Q^{T}l=l\). Therefore,
\begin{equation*}
\operatorname{Sym}(c)=\{I_2,P\}
\end{equation*}

For the boundary function\,\(h(X)=(l^{T}X)^{2}\), its symmetry condition is\,\(h(QX)=h(X)\),
which is equivalent to\,\(Q^{T}l=l\)\,or\,\(Q^{T}l=-l\). Therefore,
\begin{equation*}
\operatorname{Sym}(h)=\{I_{2},\,P,\,-I_{2},\,-P\}
\end{equation*}

This model contains four point sources, and the correspondence between their positions and source strengths is
\[
((x_1,y_1),q_1)=((1,0)^T,\,10),\quad
((x_2,y_2),q_2)=((0,1)^T,\,10)\]
\[((x_3,y_3),q_3)=((-1,0)^T,\,1),\quad
((x_4,y_4),q_4)=((0,-1)^T,\,1)
\]
Under the action of the reflection transformation\,\(P\), the interior source term maps \((1,0)\)\,and\,\((0,1)\)\,to each other, and maps \((-1,0)\)\,and\,\((0,-1)\)\,to each other, while the corresponding point-source strengths remain the same. Under the action of central inversion\,\(-I_2\)\,or the reflection transformation\,\(-P\), however, the point source with strength\,\(10\)\,is mapped to the position of a point source with strength\,\(1\), so the correspondence of source strengths is no longer preserved. Therefore, the symmetry group of the interior source is
\begin{equation*}
\operatorname{Sym}(f)=\{I_2,\,P\}
\end{equation*}

In summary, the common symmetry group of the equation coefficients, interior source, and boundary source is
\begin{equation*}
G=\operatorname{Sym}(A)\cap
\operatorname{Sym}(\hat{b})\cap
\operatorname{Sym}(c)\cap
\operatorname{Sym}(f)\cap
\operatorname{Sym}(h)
=
\{I_2,\,P\}
\end{equation*}
By Theorem\,\ref{thm:symmetry_solution}, the unique solution $\,u(x,y)\,$ of this mathematical model satisfies
\begin{equation*}
u(x,y)=u(y,x),~\forall(x,y)\in{\Omega}
\end{equation*}
This shows that the solution $\m{u}$ of the second-order linear elliptic differential equation is symmetric with respect to the line $\m{y=x}$. Therefore, the second-order linear elliptic boundary value problem on the full disk can be reduced to the corresponding boundary value problem on a half-domain. In this case, the generalized homogeneous\,Neumann\,boundary condition $A\nabla u\cdot n=0$ must be used on the internal symmetry boundary $\m{y=x}$.

\subsubsection{Finite Element Numerical Simulation of the Model Problem}
Linear triangular finite elements are used to discretize and solve the full domain and the reduced half-domain separately. The full-domain finite element computation result is used to show the overall solution distribution of the original boundary value problem on the disk, and the half-domain finite element computation result is used to verify whether the domain-reduction model can reproduce the solution structure of the corresponding part of the full domain. The two computations use the same equation coefficients, point-source settings, and outer-boundary\,Dirichlet\,conditions; the difference is that the half-domain model uses a homogeneous generalized\,Neumann\,boundary condition on the internal symmetry boundary\,\(y=x\). The finite element numerical solutions are shown in Fig.\,\ref{fig:elc_two_results}.\\
\begin{figure}
    \centering
    \begin{minipage}[b]{0.47\textwidth}
        \centering
        \includegraphics[
            height=3.6cm,
            keepaspectratio,
            trim=0 0 0 0,
            clip
        ]{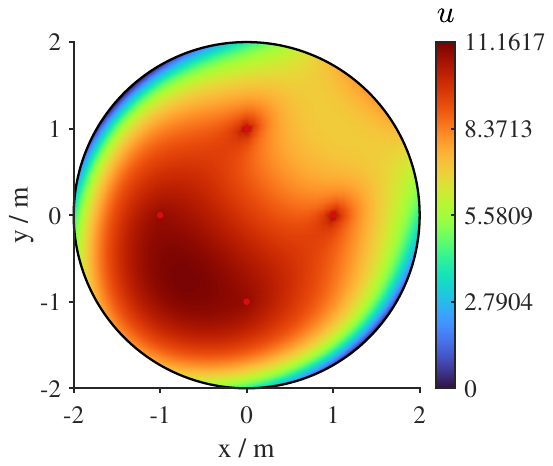}

        \vspace{0.2em}
        (a) Numerical result on the full disk
    \end{minipage}
    \hfill
    \begin{minipage}[b]{0.47\textwidth}
        \centering
        \includegraphics[
            height=3.6cm,
            keepaspectratio,
            trim=0 0 0 0,
            clip
        ]{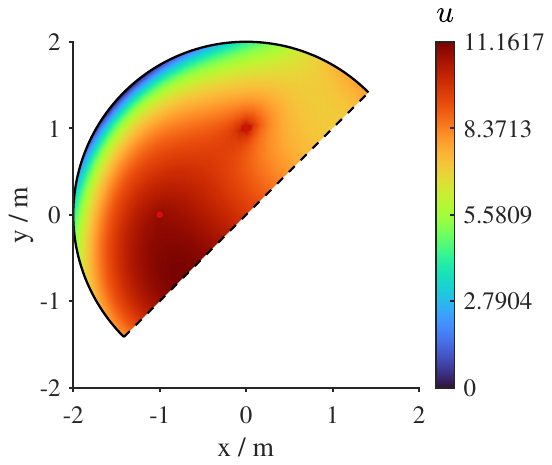}

        \vspace{0.2em}
        (b) Numerical result on the symmetric half-domain
    \end{minipage}

    \vspace{0.3em}
    \caption{Comparison of finite element numerical solutions on the full disk and the symmetric half-domain}
    \label{fig:elc_two_results}
\end{figure}
\indent As shown in Fig.\,\ref{fig:elc_two_results}, the finite element numerical solution on the full disk exhibits a clear mirror-symmetry feature with respect to the line $\m{y=x}$, and the numerical solution obtained from the half-domain model is consistent with the solution distribution at the corresponding location in the full domain. This example verifies the correctness of the theory established in this paper and also shows that exploiting the intrinsic symmetry of the problem can effectively reduce the computational scale and improve solution efficiency.
\section{Conclusions and Outlook}
This paper studied the symmetry of solutions to second-order linear elliptic\,Dirichlet\,boundary value problems on bounded domains in $\m{n}$ dimensions. The symmetry groups of the second-order coefficient matrix function, first-order coefficient column-vector function, zeroth-order coefficient function, interior source, and boundary source were defined respectively. On this basis, it was rigorously proved that the common symmetry group of the above five types of symmetry groups is a subgroup of the symmetry group of the solution. If the common symmetry group contains several reflection-symmetry elements, then the boundary value problem for the second-order linear elliptic equation on the original full domain can be reduced to the corresponding problem on a subdomain. It was rigorously proved that the boundary condition arising on the boundary of the subdomain is a homogeneous generalized\,Neumann\,boundary condition. For the case $\m{n=2}$, the elliptic boundary value problem on the subdomain was solved numerically by the linear finite element method. Finally, the theoretical example and numerical experiments presented in this paper verified the theoretical results.\\
\indent This paper established a symmetry-reduction theory and numerical verification for second-order linear elliptic\,Dirichlet\,problems on bounded domains. Future research can be extended in three directions. Theoretically, the symmetry-group analysis method can be generalized to elliptic equations on unbounded domains, the influence of boundary conditions at infinity on the symmetry-group structure and symmetry reduction of solutions can be investigated, and the symmetry-reduction theory for unbounded-domain problems can be further improved. In terms of boundary conditions, the study can be extended to more general cases such as\,Neumann, Robin, and mixed boundary conditions, so as to reveal the laws of symmetry reduction of solutions and the forms of subdomain boundary conditions under different boundary conditions. In terms of applications, this symmetry-reduction method can substantially reduce the solution complexity of high-dimensional problems and has important application value in engineering fields such as steady-state heat conduction, electrostatic-field analysis, and seepage mechanics. Future work may combine this method with practical engineering models to develop efficient numerical algorithms based on symmetry and provide a new approach for solving complex engineering problems.


\end{document}

%% file: latex_defs.tex
\def\^{\hat}
\def\~{\tilde}
\def\3h{{3\over 2}}

\def\eqn#1$${\eqno{{\rm #1}}$$}

\def\~{\tilde}

\def\^{\hat}

\def\XXint#1#2#3{{\setbox0=\hbox{$#1{#2#3}{\int}$}
     \vcenter{\hbox{$#2#3$}}\kern-.5\wd0}}


%% file: waveguidecondition.bbl
\begin{thebibliography}{0}%
\makeatletter
\providecommand \@ifxundefined [1]{%
 \@ifx{#1\undefined}
}%
\providecommand \@ifnum [1]{%
 \ifnum #1\expandafter \@firstoftwo
 \else \expandafter \@secondoftwo
 \fi
}%
\providecommand \@ifx [1]{%
 \ifx #1\expandafter \@firstoftwo
 \else \expandafter \@secondoftwo
 \fi
}%
\providecommand \natexlab [1]{#1}%
\providecommand \enquote  [1]{``#1''}%
\providecommand \bibnamefont  [1]{#1}%
\providecommand \bibfnamefont [1]{#1}%
\providecommand \citenamefont [1]{#1}%
\providecommand \href@noop [0]{\@secondoftwo}%
\providecommand \href [0]{\begingroup \@sanitize@url \@href}%
\providecommand \@href[1]{\@@startlink{#1}\@@href}%
\providecommand \@@href[1]{\endgroup#1\@@endlink}%
\providecommand \@sanitize@url [0]{\catcode `\\12\catcode `\$12\catcode
  `\&12\catcode `\#12\catcode `\^12\catcode `\_12\catcode `\%12\relax}%
\providecommand \@@startlink[1]{}%
\providecommand \@@endlink[0]{}%
\providecommand \url  [0]{\begingroup\@sanitize@url \@url }%
\providecommand \@url [1]{\endgroup\@href {#1}{\urlprefix }}%
\providecommand \urlprefix  [0]{URL }%
\providecommand \Eprint [0]{\href }%
\providecommand \doibase [0]{http://dx.doi.org/}%
\providecommand \selectlanguage [0]{\@gobble}%
\providecommand \bibinfo  [0]{\@secondoftwo}%
\providecommand \bibfield  [0]{\@secondoftwo}%
\providecommand \translation [1]{[#1]}%
\providecommand \BibitemOpen [0]{}%
\providecommand \bibitemStop [0]{}%
\providecommand \bibitemNoStop [0]{.\EOS\space}%
\providecommand \EOS [0]{\spacefactor3000\relax}%
\providecommand \BibitemShut  [1]{\csname bibitem#1\endcsname}%
\let\auto@bib@innerbib\@empty
\end{thebibliography}%


\begin{thebibliography}{10}
\bibitem{zhou2005}
Zhou S X. Partial Differential Equation[M]. Beijing: Peking University Press, 2005. (In Chinese)

\bibitem{Ovsi1982}
Ovsiannikov L V. Group Analysis of Differential Equations[M]. Academic Press, 1982.

\bibitem{Olver1993}
 Olver P J. Applications of Lie Groups to Differential Equations[M]. Springer, 1993.

\bibitem{GIDAS1979} Gidas B, Ni W and Nirenberg L. Symmetry and related properties via the maximum principle[J]. Communications in Mathematical Physics, 1979, 68(3): 209-243.

\bibitem{FRA2000}
Fraenkel L E. An Introduction to Maximum Principles and Symmetry in Elliptic Problems[M]. Cambridge: Cambridge University Press, 2000.

\bibitem{bal1982}
Ballisti R, Hafner C and Leuchtmann P. Application of the representation theory of finite groups to field computation problems with symmetrical boundaries[J], IEEE Transactions on Magnetics, 1982, 18(2):584--587.


\bibitem{dou192}
Douglas C C and Mandel J. An abstract theory for the domain reduction method[J]. Computing, 1992, 48(1): 73-96.

\bibitem{all1992}
Allgower E L, B\"ohmer K, Georg K and Miranda R. Exploiting symmetry in boundary element methods[J],
SIAM Journal on Numerical Analysis, 1992, 29(2):534--552.

\bibitem{bos1986}
Bossavit A. Symmetry, groups, and boundary value problems. A progressive introduction to noncommutative harmonic analysis of partial differential equations in domains with geometrical symmetry[J]. Computer Methods in Applied Mechanics and Engineering, 1986, 56(2): 167-215.

\bibitem{bos1993}
Bossavit A. Boundary value problems with symmetry and their approximation by finite elements[J]. SIAM Journal on Applied Mathematics, 1993, 53(5): 1352-1380.

\bibitem{lob1994}
Lobry J and Broche C. Exploitation of the geometrical symmetry in the boundary element method with the group representation theory[J], IEEE Transactions on Magnetics, 1994, 30(1):118--123.

\bibitem{lob1996}
Lobry J. Use of group theory in symmetrical 3-D eddy-current problems[J], IEE Proceedings - Science, Measurement and Technology, 1996, 143(6): 369--376.


\bibitem{hou2022}
Hou P, Liu F and Zhou A, Symmetrized two-scale finite element discretizations for partial differential equations with symmetric solutions. arXiv:2205.15524, 2022.

\bibitem{wang2023}
Wang C, Peng R, He Y, Yang H and Han X. A scaled boundary finite element partitioning based reduced order algorithm for the elastic analysis of cyclically symmetric structures[J]. International Journal for Numerical Methods in Engineering, 2023.

\bibitem{wang2024}
Wang J, Liu L and Chen Y. Efficient finite element modeling of photonic structures with combined symmetry operations for waveguide modal analysis[C], 2024 Light Conference, Changchun, China, 2024, pp. 1-5.

\bibitem{wang2015}
Wang D M. Numerical Methods for Elliptic Partial Differential Equations[M]. Beijing: Science Press, 2015. (In Chinese)

\bibitem{bre2008} Brenner S C and Scott L R. The Mathematical Theory of Finite Element Methods[M]. 3rd ed. New York: Springer, 2008.

\bibitem{Ciar2002}
Ciarlet P G. The Finite Element Method for Elliptic Problems[M]. Philadelphia: Society for Industrial and Applied Mathematics, 2002.

\bibitem{chenshao2020}
Zhao Z J and Chen S C. A prism element for a second-order elliptic mixed problem[J]. Acta Mathematica Scientia, 2020, 40(3): 684-693. (In Chinese)

\bibitem{Dol2015} Dolbeault J, Felmer P and Monneau R. Symmetry and nonuniformly elliptic operators[J]. Differential and Integral Equations, 2005, 18(2): 141-154.

\bibitem{Conway}
Conway J B. A Course in Functional Analysis[M]. New York: Springer, 1990.

\bibitem{Evans}
Evans L C. Partial Differential Equations[M]. American Mathematical Society, 2022.

\bibitem{sitikejin}
Kostrikin A I. Introduction to Algebra (Vol. I): Basic Algebra[M]. Trans. Zhang, Y. B. 2nd ed. Beijing: Higher Education Press, 2006. (In Chinese)
\bibitem{artin2014}
Artin M. Algebra[M]. Trans. Yao H L and Ping Y R. 2nd ed. Beijing: China Machine Press, 2014. (In Chinese)
\bibitem{bur2001}
Burago D, Burago Y and Ivanov S. Metric Geometry[M]. American Mathematical Society, 2001.

\bibitem{zhang2025}
    Zhang Y J, Zhang Q, Sun H D, Zhao Z T and Huang X F. Electromagnetic field computation method based on deep Ritz method[J]. Transactions of China Electrotechnical Society, 2025, 40(23): 7462-7474. (In Chinese)
\end{thebibliography}
